\date{\today}
\date{\today}
\newcommand{\bbH}{{\mathbb{H}}}
\newcommand{\bbR}{{\mathbb{R}}}
\newcommand{\bbZ}{{\mathbb{Z}}}
\newcommand{\bbC}{{\mathbb{C}}}
\newcommand{\bbT}{{\mathbb{T}}}
\newcommand{\cA}{{\mathcal{A}}}
\newcommand{\cR}{{\mathcal{R}}}
\newcommand{\cB}{{\mathcal{B}}}
\newcommand{\cC}{{\mathcal{C}}}
\renewcommand{\k}{\varkappa}
\renewcommand{\l}{\ell}
\renewcommand{\Re}{\text{\rm Re\,}}
\renewcommand{\Im}{\text{\rm Im\,}}
\allowdisplaybreaks \numberwithin{equation}{section}
\newtheorem{theorem}{Theorem}[section]
\newtheorem{lemma}[theorem]{Lemma}
\theoremstyle{definition}
\newtheorem{remark}[theorem]{Remark}
\newtheorem{problem}[theorem]{Problem}
\title
{Parametrization of spectral surfaces  of a  class of periodic  5-diagonal matrices}
\author{Ionela Moale and Peter Yuditskii\footnote{This work was supported by Austrian Science Fund FWF, project no: P22025-N18.}}
\date{\today}
\begin{document}

\maketitle

\begin{abstract}
The main result of this work is a parametric description of the spectral surfaces of a class of periodic 5-diagonal matrices, related to the strong moment problem. This class is a self-adjoint twin of the class of CMV matrices. Jointly they form the simplest possible classes of 5-diagonal matrices. 
\end{abstract}

 \textit{MSC}: {30E05, 30F15, 47B39, 46E22}.
  
\textit{Keywords}: {Strong Moment Problem, periodic CMV matrices, Hardy spaces on Riemann surfaces, conformal mappings, comb domains, reproducing kernels.}

\section{Introduction}
The main result of this work is a parametric description of the spectral surfaces of a class of periodic 5-diagonal matrices. This class is a self-adjoint twin of the quite popular now class of CMV matrices \cite{Sim, Sim2}. Jointly they form the simplest possible classes of 5-diagonal matrices, we explain this claim in details in Section \ref{s2}. We take a risk to introduce an abbreviation  and call them SMP (Strong Moment Problem) matrices, see   e.g. \cite{ON} and the remark below.

In a generic case SMP matrix is defined as a ratio of two 3-diagonal matrices
$A=A_2A_1^{-1}$, where
\begin{equation*}\label{tone}
A_1=\begin{bmatrix}
\ddots& \\
\ddots&\bar\pi_{-1}&  \\
\ddots&\sigma_{-1}& 0  \\
&\pi_0&1&\bar  \pi_1 \\
& & 0&\sigma_1&0 \\
& & & \pi_2&1& \ddots \\
& & & & 0&\ddots\\
& & & & & \ddots
\end{bmatrix},
\
A_2=\begin{bmatrix}
\ddots& \\
\ddots&0&  \\
\ddots&1& \bar p_0  \\
&0&q_0 & 0 \\
& & p_1&1&\bar p_2 \\
& & & 0&q_2& \ddots \\
& & & & p_3&\ddots\\
& & & & & \ddots
\end{bmatrix},
\end{equation*}
$A_1$ and $A_2$ are invertible, equivalently $\sigma_{2n-1}\not=0$ and $q_{2n}\not=0$ for all $n$.  Due to the symmetry $A=A^*$, the generating coefficient sequences are subject to the restrictions
\begin{equation*}\label{restr}
\frac{p_{2n-1}}{q_{2n-2}}=-\frac{\pi_{2n-1}}{\sigma_{2n-1}}, \quad
\frac{p_{2n}}{q_{2n}}=-\frac{\pi_{2n}}{\sigma_{2n-1}}.
\end{equation*}

Note that $A_1^{-1}$ and $A_2^{-1}$ are also 3-diagonal, thus $A$ and $A^{-1}$ are 5-diagonal matrices represented as products of 3-diagonal matrices.
Note also that degenerations are possible, see Remark \ref{rem43}.

Let $\{e_n\}$ be the standard basis in the two-sided $\l^2$. According to the above definition
\begin{equation*}\label{direct}
\begin{split}
A e_{2n}=A_2 e_{2n}=\bar p_{2n}e_{2n-1}+&q_{2n}e_{2n} +p_{2n+1}e_{2n+1}\\
A^{-1} e_{2n-1}=A_1 e_{2n-1}=\bar \pi_{2n-1}e_{2n-2}+&\sigma_{2n-1}e_{2n-1} +\pi_{2n}e_{2n}
\end{split}
\end{equation*}
Thus, constructing $A$ we follow  the procedure, which is similar to the CMV matrices case:
having $e_{-1}$ and $e_0$ as the generators of the cyclic subspace we form the whole space applying $A$ on the even step and $A^{-1}$ on the odd step, however, as it was mentioned, the operator is not unitary, but self-adjoint, that is, the spectrum is not on the unit circle but on the real axis. From this point of view the situation is similar to the orthogonalization procedure in the strong moment problem construction \cite{ON}, however because of periodicity  we are interested in two-sided matrices; it  is very essential: we do not assume that  the operator $A$ is positive! 

 $A$ can be represented as a \textit{two dimensional perturbation} of a block orthogonal matrix
$$
A=\begin{bmatrix}
A_-&0\\0&A_+
\end{bmatrix}+e_{-1}\langle\cdot, \tilde e_0\rangle \tilde p_0+
\tilde e_{0}\langle\cdot,  e_{-1}\rangle \tilde p_0,
$$
where 
$$
\tilde p_0=\|P_+ Ae_{-1}\|, \quad \tilde e_0=\frac 1 {\tilde p_0} {P_+ A e_{-1}},
$$
$A_\pm=P_\pm A P_\pm$ are restrictions of $A$ to the positive and negative half-axis according to the orthogonal decomposition $\l^2=\l^2_-\oplus \l_+^2$. For this reason certain general facts from its spectral theory can be reduced to the spectral theory of Jacobi matrices.  However it is quite different as soon as we pose the problem:
\begin{problem}
Describe the spectral sets of periodic SMP matrices. 
\end{problem}

Recall that for periodic Jacobi matrices the  spectrum is a system of intervals, which possesses the following parametric description, see \cite{EY} and references therein.  For a system of nonnegative parameters  $\{h_k\}_{k=1}^{n-1}$, let $D=D(h_1,\dots h_{n-1})$ be the region obtained from the half-strip
$$
\{w: -\pi n<\Re w<0, \ \Im w>0\} 
$$
by removing vertical intervals 
$$
\{ w: \Re w= -\pi k,\ 0<\Im w\le h_k \}, \quad k=1,\dots,n-1.
$$
Let $\theta$ be the conformal map of the upper half-plane $\bbH$ to $D$ normalized by the conditions $\theta(a_0)=0$, $\theta(b_0)=-\pi n$, $\theta (\infty)=\infty$. Denote by 
$E(h_1,\dots,h_{n-1})$ the full preimage of the interval $[-\pi n,0]\subset\partial D$, i.e.:
$$
E(h_1,\dots,h_{n-1}):=\theta^{-1}([-\pi n,0]).
$$
A system of intervals $E=[b_0,a_0]\setminus \cup _{j\ge 1}(a_j,b_j)$ is the spectrum of a periodic Jacobi matrix if and only if $E=E(h_1,...,h_{n-1})$ for a certain system of parameters 
$\{h_k\}_{k=1}^{n-1}$.

The spectral sets for periodic CMV matrices are given by conformal mappings onto similar \textit{periodic comb-like domains}.
To formulate our main result we define comb regions of a new kind.

 \begin{figure}
\begin{center} \includegraphics[scale=1]
{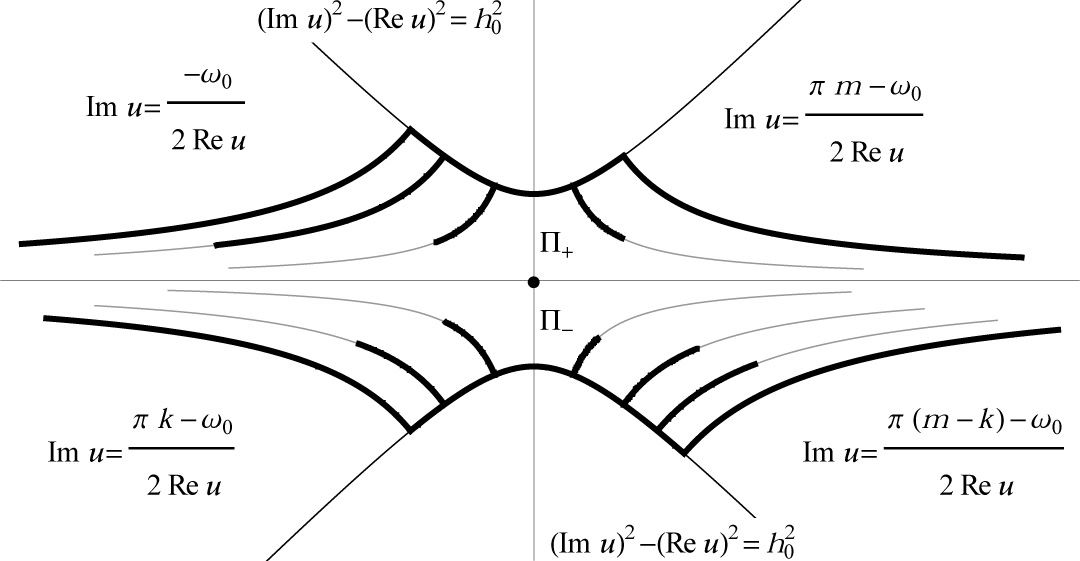}\end{center}
\caption{$\Pi$ region for $\omega_0\not=\pi \l$}\label{fig1}
\end{figure}

 \begin{figure}
\begin{center} \includegraphics[scale=1]
{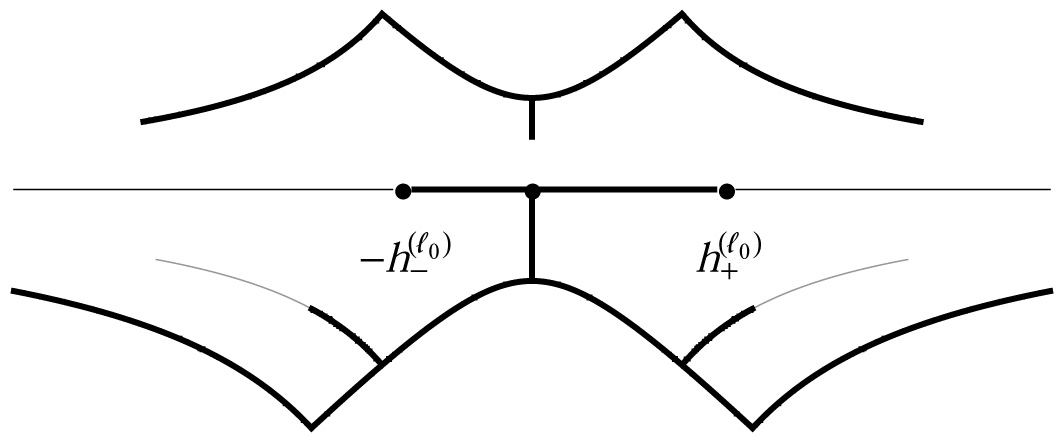}\end{center}
\caption{$\Pi$ region for $\omega_0=\pi \l_0$.}\label{fig2}
\end{figure}

For integers $k$ and $m\in[0,k]$ and parameters $h_0>0$ and $\omega_0$, 
$0\le\omega_0\le\pi m$ consider the region $\Pi_{k}^m(h_0, \omega_0)$ bounded by the hyperbolic curves
\begin{equation}\label{boundud}
 (\Im u)^2< (\Re u)^2+h^2_0,
\end{equation}
and the orthogonal systems of hyperbolas
\begin{equation}\label{boundlr}
\begin{split}
\Im u<\frac{\pi m-\omega_0}{2\Re u}, \quad  &\Im u>\frac{\pi (m-k)-\omega_0}{2\Re u},
\quad\text{for}\ \Re u>0,\\
\Im u<\frac{-\omega_0}{2\Re u}, \quad  &\Im u>\frac{\pi (k)-\omega_0}{2\Re u},
\quad\text{for}\ \Re u<0.
\end{split}
\end{equation}
If $\omega_0\not=\pi \l$, $0\le \l\le m$, by $\Pi$ we denote the region which is obtained from 
$\Pi_{k}^m(h_0, \omega_0)$ by removing pieces of hyperbolic curves
\begin{equation}\label{cut1}
\Im u=\frac{\pi\l-\omega_0}{2\Re u}, 
\quad\text{for}\ \Im u>0, \ 1\le \l\le m-1
\end{equation}
and
\begin{equation}\label{cut2}
\Im u=\frac{\pi(\l-k)-\omega_0}{2\Re u}, 
\quad\text{for}\ \Im u<0,\ m+1\le \l\le 2k-1
\end{equation}
of length $h_\l$, $h_l\ge 0$, see Fig. \ref{fig1}. If $\omega_0=\pi \l_0$ then the hyperbolic curves related to $\l=\l_0$ in \eqref{cut1}  and $\l=k+\l_0$ in \eqref{cut2} degenerate. In this case the corresponding cuts are pieces of the imaginary axis, as soon as 
\begin{equation}\label{cut3}
h_{\l_0}<h_0 \text{ and } h_{k+\l_0}<h_0.
\end{equation}
Otherwise one of them still satisfies \eqref{cut3}, and another one has T-shape, see Fig. \ref{fig2}, consisting of the piece of the imaginary axis 
\begin{equation}\label{cut4}
0\le \Im u\le h_0 \text{ or  }-h_0\le \Im u\le 0,
\end{equation}
respectively, and of the real interval 
\begin{equation}\label{cut5}
-h^{(\l_0)}_- \le\Re u\le h^{(\l_0)}_+,\quad  h^{(\l_0)}_\pm\ge 0.
\end{equation}

\begin{theorem}
For a region $\Pi$ described by the conditions \eqref{boundud}--\eqref{cut5}, 
let $\theta:\bbH\to \Pi$ be a  conformal map such that $0$ and $\infty$ correspond to the infinite points in $\Pi$. 
A system of intervals $E$, $0,\infty\not\in E$, is the spectral set of a periodic matrix of SMP class if and only if  it corresponds to the preimage of the part of the boundary given by the condition \eqref{boundud}  for a certain $\Pi$.
\end{theorem}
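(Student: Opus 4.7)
The plan is to build a Floquet/Bloch framework for periodic SMP matrices in direct analogy with the classical Jacobi and CMV cases, and then to invert the resulting spectral map via character-automorphic Hardy spaces on the hyperelliptic Riemann surface attached to $E$.

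For the direct direction, consider a periodic SMP matrix $A = A_2 A_1^{-1}$. I would begin by showing that the eigenvalue equation $A\psi = z\psi$ is a second-order recurrence with coefficients rational in $z$: the inversion of $A_1$ contributes a factor $z^{-1}$ at every other step, so that one period's transfer matrix is $2\times 2$ with entries rational and singular only at $\{0,\infty\}$. The half-trace $\Delta(z) = \tfrac12 \tr T(z)$ is then a rational function whose orders at $0$ and $\infty$ encode the integers $k$ and $m$ appearing in $\Pi$, and the spectrum of $A$ is $E = \{z\in\bbR : |\Delta(z)|\le 1\}$, a finite union of intervals disjoint from $\{0,\infty\}$. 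I would next introduce the quasi-momentum $\theta(z) = -i\log(\Delta(z)+\sqrt{\Delta(z)^2-1})$, branch analytic on $\bbH$; a Marchenko--Ostrovskii-type argument should show that $\theta$ is univalent on $\bbH$ and maps it onto a region of the required type: the upper boundary \eqref{boundud} is the preimage of $\Delta\in[-1,1]$; the four asymptotic hyperbolas \eqref{boundlr} come from the rational growth of $\Delta$ at $0$ and $\infty$, with $(k,m,h_0,\omega_0)$ read off from leading coefficients and phases there; and the slits \eqref{cut1}--\eqref{cut2} sit above the extrema of $\Delta$ on the gaps, with heights $h_\l\ge 0$. In the resonance case $\omega_0 = \pi\l_0$ two of these slits align with the imaginary axis, and the T-cut option \eqref{cut4}--\eqref{cut5} arises exactly when the corresponding extremum lies on the joint branch, fixing $h_\pm^{(\l_0)}$.

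For the converse, given $\Pi$ of the prescribed form and its normalized conformal map $\theta:\bbH\to\Pi$, I would put $E := \theta^{-1}(\text{arc \eqref{boundud}})$ and $\Delta(z):=\cos\theta(z)$; the latter should extend to a rational function on $\bbC$ with $\{|\Delta|\le 1\}\cap\bbR = E$ and with the correct pole/zero pattern at $0,\infty$. To realize $E$ as a spectrum I would work on the hyperelliptic Riemann surface $\cR(E)$ in the character-automorphic Hardy spaces $H^2(\cR(E),\alpha)$. The two points of $\cR(E)$ over $0$ and $\infty$ furnish two distinguished cyclic vectors $e_{-1}$ and $e_0$, and a basis $\{e_n\}$ can be generated by alternately applying multiplication by $z$ at even steps and by $z^{-1}$ at odd steps. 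Because $z^{\pm 1}$ shift the pole orders at $0$ and $\infty$ by one in opposite directions, each substep adds mass at only one of the two marked points, so the Gram matrix is 3-diagonal on even substeps and 3-diagonal on odd substeps; this yields the factorization $A = A_2 A_1^{-1}$ with both factors 3-diagonal. Rationality of the character $\alpha$, equivalent to the rationality of the $\Delta$ produced from $\theta$, then enforces periodicity.

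The main obstacle I expect is in the converse direction: showing that the reproducing-kernel construction genuinely splits into two 3-diagonal factors, rather than yielding only a 5-diagonal composite, requires precise orthogonality relations between the shifted reproducing kernels at the two distinguished fibers over $0$ and $\infty$. The hardest case will be the resonance $\omega_0 = \pi\l_0$, where the two extremal branches collide on the imaginary axis and the T-cut parameters $h_\pm^{(\l_0)}$ have to be identified with the local behaviour of the character-automorphic Green function at the coinciding branch points.
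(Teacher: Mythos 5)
Your converse direction is essentially the paper's: realize $E$ on the genus-zero cover $\bar\bbC\setminus E$ (with $z=\lambda-1/\lambda$) and build the operator from an orthonormal basis of reproducing kernels of the character-automorphic Hardy spaces, alternating the two points over $z=\infty$; this is Theorem \ref{harmcond} together with Section \ref{sect4}, and the orthogonality you flag as the main obstacle is exactly the observation that $b_\infty\lambda k_0^\alpha\in H^2(\alpha\mu_\infty)$ is orthogonal to $b_0b_\infty^2H^2(\alpha\mu_0^{-1}\mu_\infty^{-2})$, which forces the three-term recurrences for $\lambda e_{2n}$ and $\lambda^{-1}e_{2n+1}$ separately. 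The genuine gap is in the direct direction, at the one point where this theorem departs from the Jacobi/CMV template. Your quasimomentum $\theta=-i\log\bigl(\Delta+\sqrt{\Delta^2-1}\bigr)$ (in the paper, $k\tilde\theta$ with $\tilde\theta$ the Abelian integral \eqref{abint}, whose imaginary part is $G_0+G_\infty$) is \emph{not} univalent on $\bbH$ in the generic case, and its image is never a region bounded by the hyperbolas \eqref{boundud}--\eqref{boundlr}. Indeed, when $0$ lies in a gap of $E$ the numerator $M_{\k+1}$ in \eqref{gfinl} has two critical points not pinned to gaps; generically they form a conjugate pair $\mu_0,\bar\mu_0$ with $\mu_0\in\bbH$, so $\theta'(\mu_0)=0$ and $\theta$ is two-to-one near $\mu_0$. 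The paper cuts $\bbH$ along the level curve $\gamma\ni\mu_0$ on which $\Re d\theta=0$, proves $\gamma$ runs from $0$ to $\infty$, maps each piece $\bbH_\pm$ onto an ordinary straight comb $D_\pm$ (half-strip with vertical slits), and only then produces a single univalent map onto $\Pi$ via the square-root substitution \eqref{uofth}--\eqref{uofthminus}, $u=\pm\sqrt{-i\theta-i\omega_0-h_0^2}$. The hyperbolas \eqref{boundud}--\eqref{cut2} are the images of the straight lines $\Im\theta=0$ and $\Re\theta=-\pi\l$ under this square root; they are not asymptotics of $\Delta$ at $0$ and $\infty$. Without this step you cannot obtain $\Pi$, and a Marchenko--Ostrovskii univalence argument applied directly to $\theta$ fails.

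A second, related omission is the dichotomy that generates the two shapes of $\Pi$: the two free critical points may instead both be real, giving three real critical points in one gap; after the same substitution this produces the T-shaped cut \eqref{cut4}--\eqref{cut5} with $h^{(\l_0)}_\pm=\sqrt{h_{1,2}^2-h_0^2}$. Your ``resonance'' paragraph gestures at the right picture but does not identify this mechanism, nor the degenerate cases in between (confluent critical points, critical points in the intervals containing $0$ or $\infty$). Finally, in the converse you should verify single-valuedness of $w=e^{-(u^2(\lambda)+h_0^2+i\omega_0)}$ (it holds because the slit bases of $D_\pm$ sit at $\Re\theta\in\pi\bbZ$) and the rationality of $\xi$; ``rationality of the character $\alpha$'' is not the relevant condition, since $\alpha$ is a free parameter of the functional model --- what must be rational are the combined harmonic measures $\omega_j$ and the phase $\xi$ of Theorem \ref{harmcond}.
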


The structure of the work is as follows:
the simplest possible spectral surfaces of periodic 5-diagonal matrices are discussed in Section 2. 
In Section 3 we  prove our  main theorem. 
The functional model for periodic SMP matrices is given in Section 4.

\bigskip

The second author is thankful to the organizers of the program
 \textit{Hilbert spaces of entire functions and spectral theory of self-adjoint differential operators}, at CRM, Barcelona, 2011,  and to Alex Eremenko for 
stimulating discussions. In a sense this paper is an addition to their joint work \cite{EY}.

\section{Spectral surfaces  with the maximal number of  boundary ovals
}\label{s2}

Let $J$ be a 5-diagonal self-adjoint matrix of period $d$
\begin{equation}\label{1}
J=rS^2+pS+q+S^{-1}\bar p+ S^{-2} r,
\end{equation}
where $S$ is the shift operator and $p,r,q$ are diagonal matrices of period $d$, such that $r_m>0$.
We recall certain fundamental facts from the spectral theory of multi-diagonal periodic matrices 
\cite{MvM}, adopting to the 5-diagonal case.

For 
\begin{equation}\label{m1}
j(w)=\begin{bmatrix}
q_0&\bar p_1 & r_2        &\dots&0&r_{0}/w&p_0/w\\
p_1& q_1& \bar p_2&r_3& \dots&0           &r_{1}/w\\
r_2&p_2& q_2& \bar p_3&r_4& \dots&0           \\
 \dots& \dots& \dots&  \dots& \dots& \dots&\dots        \\
  0& \dots& r_{d-3}&  p_{d-3}& q_{d-3}& \bar  p_{d-2}   &  r_{d-1}   \\
r_{0}w&0&\dots &r_{d-2}& p_{d-2}& q_{d-2}&\bar p_{d-1}\\
\bar p_{0}w&r_1w &0&\dots & r_{d-1}&  p_{d-1}&q_{d-1}\\
\end{bmatrix}
\end{equation}
let
\begin{equation*}
F(z,w)=\frac{\det \{j(w)-z\cdot I\}}{\prod_{j=0}^{d-1} r_j}
=w^2+1/w^2+A(z)w+A_*(z) 1/w+B(z)
\end{equation*}
where $A$ and $B$ are polynomials, in particular, for even $d=2k$
\begin{equation}\label{zkw}
\begin{split}
B(z)=&\frac{z^{2k}}{\prod_{j=0}^{2k-1} r_j}+\dots,\\
 A_*(z):=\overline{A(\bar z)}
=& \left(\frac{-1}{\prod_{j=0}^{k-1} r_{2j}}+\frac{-1}{\prod_{j=0}^{k-1} r_{2j+1}}\right) z^k+\dots
\end{split}
\end{equation}
Then the spectral curve corresponding to $J$  is of the form
\begin{equation}\label{2}
\cR=\{P=(z,w):F(z,w)=0\}.
\end{equation}
$\cR$ is endowed with an 
antiholomorphic involution  $\tau P:=(\bar z, 1/\bar w)$ for which
$$
\cR\setminus \partial \cR_+= \cR_+\cup\cR_-, \quad 
\cR_+=\{P=(z,w)\in \cR: |w|<1\},
$$
see Fig. \ref{fig3}.
\begin{figure}
\begin{center} \includegraphics[scale=0.8]
{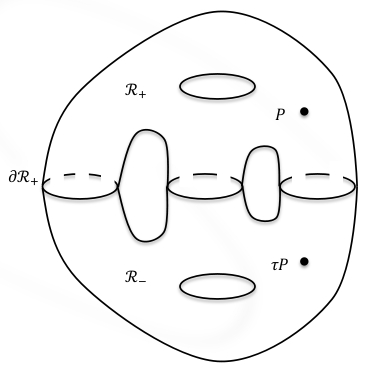}\end{center}
\caption{Topology of the spectral curve}\label{fig3}
\end{figure}
Note that the spectrum of $J$ (as the operator acting in $\l^2$) corresponds to the fixed line of the 
 involution $\tau$,
 $\tau P= P$,
$$
z\in\sigma(J)\Leftrightarrow \exists w: P=(z,w)\in\partial \cR_+.
$$
In other words it is described by the condition $|w|=1$.

Recall that the spectral surfaces related to periodic Jacobi matrices are of the form
$$
\tilde\cR=\left\{(w,z):\ w+\frac 1 w=\tilde A(z)\right\},
$$
where $\tilde A$ is a real polynomial. 
In the similar decomposition $\tilde\cR\setminus\partial \tilde\cR =\tilde\cR_+\cup\tilde\cR_-$
it possesses the following property: the number of boundary ovals, i.e., the number of intervals
$$
\partial \tilde\cR =\{z\in\bbR: |\tilde A(z)|\le 2\}
$$
  \textit{is maximal for the given genus of the surface}.

We say that the spectral curve $\cR$, related to a 5-diagonal matrix, is of the simplest structure if it has
 \textit{maximal possible number of components} of the boundary $\partial\cR_+$  for the given genus.
 For example, in Fig. \ref{fig3} the number of boundary components is 3, but its genus is 4 and the maximal possible number of components is 5. That is, the curve of this structure does not belong to the class.
 
 \begin{figure}
\begin{center} \includegraphics[scale=0.8]
{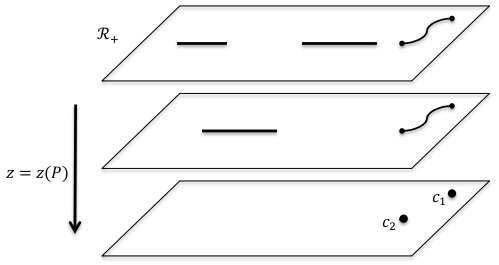}\end{center}
\caption{$\cR_+$ as the two sheeted covering of $z$-plane}\label{fig4}
\end{figure}
 In other words, let us represent $\cR_+$ as two a sheeted covering of the $z$-plane, see Fig. \ref{fig4}. It is a hyperelliptic curve with a system of cuts $\partial \cR_+$. We say that the spectral curve is of the simplest structure if this hyperelliptic curve has genus $0$, i.e.:
 \begin{equation*}\label{simeq}
  \cR_+\simeq \bar \bbC\setminus E.
\end{equation*}
The corresponding equivalence can be written explicitly
\begin{equation}\label{paramm}
z=\lambda+\frac{c^2}{\lambda-\lambda_0},\ \lambda\in \bbC,\quad c_1=\lambda_0-2c, \ c_2=\lambda_0+2c,
\end{equation}
where $c_1$, $c_2$ denote the only two possible critical values of $z$, in the case that both numbers are finite
and if, say, $c_2=\infty$ then
 $$
z=\lambda^2-2c,\quad c_1=c^2-2c.
$$
The set $E$, which corresponds to $\partial\cR_+$, is a system of cuts in the complex plane $\bbC$ with the property 
\begin{equation}\label{cuts0}
z(\lambda)\in\bbR \text{  for  }\lambda\in E.
\end{equation}

It is essential to note that  $E$ is far from being an arbitrary system of cuts for which 
\eqref{cuts0} holds. Recall that up to now the second function $w$ was not involved into considerations. Meanwhile $w=w(\lambda)$ is a function in 
$\bar\bbC\setminus E$ with the following properties \cite{MvM}:
\begin{itemize}
\item[(i)] $w$ is single-valued and holomorphic,
\item[(ii)] $|w|<1$ in $\bar\bbC\setminus E$ and $|w|=1$ on $E$,
\item[(iii)] zeros of $w$ are $\{\lambda_0,\infty\}=z^{-1}(\infty)\not\subset E$ (of equal multiplicity).
\end{itemize}
For definiteness, here and below, we consider the case \eqref{paramm} (with two finite critical values).
The properties (i)-(iii) imply that
\begin{equation}\label{wgreen}
\frac 1 k\log\frac{1}{|w(\lambda)|}= G_{\lambda_0}(\lambda)+G_\infty(\lambda),
\end{equation}
where $G_{\lambda_0}(\lambda)$ is the Green function in the domain $\bar \bbC\setminus E$ with a logarithmic pole at $\lambda_0$ and $k$ is the  multiplicity of $w$ in $\lambda_0$ and $\infty$ respectively. 

Let us recall the concept of the complex Green function $b_{\lambda_0}(\lambda)$. It is an analytic multivalued function in $\bar \bbC\setminus E$ such that
\begin{equation}\label{greengreen}
\log\frac{1}{|b_{\lambda_0}(\lambda)|}= G_{\lambda_0}(\lambda).
\end{equation}
Note that \eqref{greengreen} determines $b_{\lambda_0}$ up to a unimodular constant. In what follows we assume the normalizations $b_{\lambda_0}(\infty)>0$ and $b_\infty(\lambda_0)>0$.

Let $\pi_1(\bar\bbC\setminus E)$ be the fundamental group of this domain. Then $b_{\lambda_0}$
generates the character $\mu_{\lambda_0}\in \pi_1(\bar\bbC\setminus E)^*$ on this group by
$$
b_{\lambda_0}\circ\gamma=\mu_{\lambda_0}(\gamma) b_{\lambda_0},
\quad \gamma\in \pi_1(\bar\bbC\setminus E),
$$
which indicates the multivalued structure of the complex Green function. Moreover, let us split $E$
into connected components, $E=\cup_{j=0}^{\k} E_j$, and let $\gamma_j$'s be simple contours around
$E_j$'s. Note that they form generators of the group $\pi_1(\bar\bbC\setminus E)$ subject to the condition
$$
\gamma_0\circ\dots\circ \gamma_m= \text{trivial}.
$$
Then, for a suitable  choice of the direction of $\gamma_j$,
\begin{equation}\label{expharm}
\mu_{\lambda_0}(\gamma_j)=e^{2\pi i\omega_{\lambda_0}(E_j)},
\end{equation}
where $\omega_{\lambda_0}(E_j)$ is the harmonic measure of $E_j$ at $\lambda_0$.

The factor $b_{\lambda_0}b_\infty$ removes the singularities of $z$ in $\bar \bbC\setminus E$.
 Let
$$
t_\infty=\frac{(b_\infty b_{\lambda_0}z)(\infty)}{|(b_\infty b_{\lambda_0}z)(\infty)|}, \quad
t_{\lambda_0}=\frac{(b_\infty b_{\lambda_0}z)(\lambda_0)}{|(b_\infty b_{\lambda_0}z)(\lambda_0)|}, 
$$
Define $\xi\in[0,1)$ by the condition $t_\infty =e^{2\pi i \xi}t_{\lambda_0}$. Let $\mu=\mu_{\lambda_0}\mu_{\infty}$. Thus $\mu(\gamma_j)=e^{2\pi i \omega_j}$, where 
$\omega_j=\omega_{\lambda_0}(E_j)+\omega_\infty(E_j)$.

\begin{theorem}\label{harmcond}
Let $z$ be given by  \eqref{paramm}. Let $E=\cup_{j=0}^{\k} E_j\subset z^{-1}(\bbR)$ be a system of cuts \eqref{cuts0}. Then $\cR=\cR_+\cup\partial\cR_+\cup\cR_-$, where $\cR_+\simeq\bar \bbC\setminus E$, is a spectral curve of a 5-diagonal periodic matrix if and only if the numbers $\xi$ and $\omega_{j}$ (for all $j$) are rational. Moreover 
 $w=t_\infty^{-k}(b_{\lambda_0}b_\infty)^k$, where $k$ is a common denominator of these rational numbers.
\end{theorem}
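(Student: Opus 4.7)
The theorem is an ``if and only if,'' and both directions revolve around the identity $|w|=|b_{\lambda_0}b_\infty|^k$ that is forced by properties (i)--(iii). For necessity, assume $\cR$ is the spectral curve of a periodic 5-diagonal matrix and let $w$ satisfy (i)--(iii). Combining (ii) and (iii) gives \eqref{wgreen}, whence $|w|=|b_{\lambda_0}b_\infty|^k$, so $w/(b_{\lambda_0}b_\infty)^k$ is locally holomorphic of modulus one and hence locally constant. Transporting around a generator $\gamma_j\in\pi_1(\bar\bbC\setminus E)$ multiplies $(b_{\lambda_0}b_\infty)^k$ by $\mu(\gamma_j)^k=e^{2\pi i k\omega_j}$ but leaves the single-valued $w$ unchanged, forcing $k\omega_j\in\bbZ$ and $\omega_j\in\bbQ$. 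For the rationality of $\xi$ I would use the spectral equation $F(z,w)=0$ near the two preimages $\lambda=\infty,\lambda_0$ of $z=\infty$: the leading-order balance of $B(z)$ against $A_*(z)/w$ dictated by \eqref{zkw} pins $z^k w$ to one and the same real positive value at both points. Inserting $w=c\cdot(b_{\lambda_0}b_\infty)^k$ with $|c|=1$ and expanding locally at each of the two points makes $c$ simultaneously equal to $t_\infty^{-k}$ and $t_{\lambda_0}^{-k}$, which holds precisely when $k\xi\in\bbZ$; this yields $\xi\in\bbQ$ and identifies $c=t_\infty^{-k}$, as stated in the ``moreover.''

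For sufficiency, assume $\omega_j,\xi\in\bbQ$ with common denominator $k$ and put $w:=t_\infty^{-k}(b_{\lambda_0}b_\infty)^k$. Properties (ii) and (iii) follow by inspection from \eqref{greengreen} and the zero loci of $b_\infty,b_{\lambda_0}$, and single-valuedness (i) from the triviality of $\mu^k$. The remaining content is to produce an algebraic relation $F(z,w)=0$ in spectral form. I pass to the compact double $\cR$ of $\bar\bbC\setminus E$ and extend $w$ across $E$ by Schwarz reflection; then $w$ is meromorphic on $\cR$ of degree $2k$ with zero/pole divisor supported entirely above $z=\infty$, while $z$ has degree $4$, so $w$ satisfies a degree-$4$ polynomial over $\bbC(z)$. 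The pushforward of the divisor of $w$ under $z$ is trivial, so the norm $\prod w_i$ is a nonzero constant; the reflection pairing $w\leftrightarrow 1/w$ on the four sheets above real $z$ identifies this constant as $1$, giving leading and trailing coefficients both equal to $1$. The remaining elementary symmetric functions have poles only above $z=\infty$ and are therefore polynomials in $z$, producing $A$, $A_*$, $B$; the condition $k\xi\in\bbZ$ matches the asymptotic $z^k w$-values at $\lambda=\infty$ and $\lambda_0$, which gives the leading coefficients required in \eqref{zkw}. Recovery of an actual SMP matrix $J$ realizing this $F$ is then carried out via the functional model of Section 4.

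The main obstacle is the sufficiency step of showing that the symmetric-function coefficients arising from the degree-$4$ extension $\bbC(\cR)/\bbC(z)$ are not only rational but polynomial, with the precise leading terms of \eqref{zkw}, and that the resulting polynomial does arise as the Floquet characteristic of a genuine periodic SMP matrix. This is the point where the $\xi$-rationality is essential and where a careful accounting of the behavior of $w$ and $z$ above $z=\infty$ is required. The necessity direction is comparatively routine once the Green-function identification of $|w|$ has been set down.
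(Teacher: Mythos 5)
Your necessity argument is essentially the paper's: the single-valuedness of $w$ together with \eqref{wgreen} and \eqref{expharm} forces $k\omega_j\in\bbZ$, and the behaviour of $wz^k$ at the two preimages of $z=\infty$ forces $e^{2\pi ik\xi}=1$. One detail is off: from \eqref{zkw} the quantities $1/(wz^k)(\lambda_0)$ and $1/(wz^k)(\infty)$ are the two roots $1/\prod_j r_{2j}$ and $1/\prod_j r_{2j+1}$ of a quadratic with positive coefficients pattern --- they are both \emph{positive} but in general \emph{not equal}; positivity of both is all you need (and all that is true) to conclude that the ratio $(zb_{\lambda_0}b_\infty)^k(\infty)/(zb_{\lambda_0}b_\infty)^k(\lambda_0)$ is positive, i.e.\ $k\xi\in\bbZ$.

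The sufficiency direction, however, has a genuine gap, and you have in effect flagged it yourself. Producing the algebraic relation $F(z,w)=0$ with the normalization \eqref{zkw} does not by itself produce a periodic $5$-diagonal matrix whose spectral curve it is: the passage from a candidate Floquet polynomial back to an operator is precisely the inverse problem, and your plan defers it to ``the functional model of Section 4'' without carrying it out. The paper avoids the detour through the explicit equation $F(z,w)=0$ altogether. It first builds the operator $J$ directly as multiplication by $z$ in the orthonormal basis \eqref{basis} of reproducing kernels (Lemma \ref{lmodel} shows this is $5$-diagonal and self-adjoint for \emph{any} system of cuts, with no rationality hypothesis), and then uses the rationality assumptions only to prove \emph{periodicity}: setting $w=t_\infty^{-k}(b_{\lambda_0}b_\infty)^k$, triviality of $\mu^k$ makes $w$ single-valued and gives $we_n=e_{n+2k}$ for odd $n$, while the identity for even $n$ requires exactly $t_\infty^{-k}t_{\lambda_0}^{k}=e^{-2\pi ik\xi}=1$; hence multiplication by $w$ is $S^{2k}$, it commutes with multiplication by $z$, and $JS^{2k}=S^{2k}J$. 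This shift identity is the step your proposal is missing, and it is where $\xi\in\bbQ$ actually enters the construction. Your field-extension computation of the symmetric functions of $w$ over $\bbC(z)$ is plausible as far as it goes, but it is both incomplete (the constant term is only shown unimodular without the reflection symmetry argument, and the leading coefficients of $A_*$ and $B$ are asserted rather than derived) and, more importantly, unnecessary once the operator-theoretic argument is in place.
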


A proof is based on a fact of the general theory \cite{MvM}, that a given periodic $J$ with the spectral curve $\cR$ possesses functional representation as the multiplication operator by $z$. We give such a representation following basically \cite{PY, SY}. 

For a fixed character $\alpha$ the multivalued analytic functions $F$, $F\circ\gamma=\alpha(\gamma) F$, such that $|F(\lambda)|^2$ has a harmonic majorant in $\bar\bbC\setminus E$, form the Hardy space $H^2(\alpha)\subset L^2_{\omega_\infty}$ with the  norm given by the integral of the boundary values:
$$
\|F\|^2=\int_E |F(\lambda)|^2\omega_\infty(d\lambda).
$$
Note that the point-evaluation functional is bounded in this space and therefore in $H^2(\alpha)$ there is the reproducing kernel $k^{\alpha}_\lambda$:
$$
F(\lambda)=\langle F, k^\alpha_\lambda\rangle, \quad \lambda\in\bar\bbC\setminus E,
$$
for all $F\in H^2(\alpha)$.

\begin{lemma}
For a character $\alpha\in \pi_1(\bar\bbC\setminus E)^*$ let 
$$
K_{\lambda}^\alpha=\frac{k_{\lambda}^\alpha}{\|k_{\lambda}^\alpha\|}
$$ 
denote the normalized reproducing kernel at $\lambda$.  Then, for an arbitrary system of unimodular constants $t_m$, the family
\begin{equation}\label{basis}
e_{2n}=t_{2n}b^n_{\lambda_0}b^n_\infty{K_{\lambda_0}^{\alpha\mu_{\lambda_0}^{-n}\mu^{-n}_\infty}}, \quad
e_{2n+1}=t_{2n+1}b^{n+1}_{\lambda_0} b^n_\infty {K_\infty^{\alpha\mu_{\lambda_0}^{-n-1}\mu^{-n}_\infty}}
\end{equation}
forms an orthonormal basis in $H^2(\alpha)$, $n\ge 0$. Moreover, extended to negative indexes it forms an orthonormal  basis in $L^2_{\omega_\infty}$.
\end{lemma}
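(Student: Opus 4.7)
The plan is to prove this by the iterated peeling-off construction standard in CMV-type arguments. The basic building block is the orthogonal decomposition
\begin{equation*}
H^2(\beta) = \bbC\, k_{\lambda_0}^\beta \,\oplus\, b_{\lambda_0}H^2(\beta\mu_{\lambda_0}^{-1}),
\end{equation*}
valid for every character $\beta$, together with its analogue at $\infty$. Indeed, the second summand is the subspace of $H^2(\beta)$ of functions vanishing at $\lambda_0$: multiplication by $b_{\lambda_0}$ is an isometry (since $|b_{\lambda_0}|=1$ on $E$), so this subspace is closed of codimension one, and its orthogonal complement is the one-dimensional span of the reproducing kernel $k_{\lambda_0}^\beta$ by the very definition of the latter. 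Normalizing and multiplying by the unimodular constant $t_0$ produces $e_0$.

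Iterating, alternately at $\lambda_0$ and at $\infty$, yields after $2n$ stages the orthogonal decomposition
\begin{equation*}
H^2(\alpha) = \bigoplus_{m=0}^{2n-1}\bbC\, e_m \,\oplus\, (b_{\lambda_0}b_\infty)^n H^2(\alpha\mu_{\lambda_0}^{-n}\mu_\infty^{-n}),
\end{equation*}
with the $e_m$ precisely as in \eqref{basis}; each $e_m$ is a unit vector because $|b_{\lambda_0}|=|b_\infty|=1$ on $E$, and the above identity makes them pairwise orthogonal. To conclude that $\{e_m\}_{m\ge 0}$ is a basis of $H^2(\alpha)$ I would show that the nested residual spaces intersect in $\{0\}$. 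This relies on the strict inequality $|b_{\lambda_0}(\lambda)b_\infty(\lambda)|<1$ off $E$: if $F$ lies in every residual space, write $F=(b_{\lambda_0}b_\infty)^n G_n$ with $\|G_n\|=\|F\|$; at any fixed interior $\lambda$ in the fundamental domain, boundedness of point evaluation (uniform in the character, since the characters vary in a compact group) gives $|F(\lambda)|\le C_\lambda|b_{\lambda_0}(\lambda)b_\infty(\lambda)|^n\|F\|\to 0$, forcing $F\equiv 0$.

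For the extension to a basis of $L^2_{\omega_\infty}$ I would run the symmetric peeling in the opposite direction. The building block is now the strict inclusion $H^2(\beta)\subset b_{\lambda_0}^{-1}H^2(\beta\mu_{\lambda_0})\subset L^2_{\omega_\infty}$, whose one-dimensional orthogonal complement is spanned by $b_{\lambda_0}^{-1}k_{\lambda_0}^{\beta\mu_{\lambda_0}}$; orthogonality to any $H\in H^2(\beta)$ follows since on $E$ we have $b_{\lambda_0}^{-1}=\overline{b_{\lambda_0}}$, hence $\langle b_{\lambda_0}^{-1}k_{\lambda_0}^{\beta\mu_{\lambda_0}},H\rangle=\langle k_{\lambda_0}^{\beta\mu_{\lambda_0}},b_{\lambda_0}H\rangle=\overline{(b_{\lambda_0}H)(\lambda_0)}=0$, where the last equality uses $b_{\lambda_0}(\lambda_0)=0$. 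Alternating the analogous step at $\infty$ and at $\lambda_0$ produces the negative-indexed elements $e_m$ of \eqref{basis}, mutually orthonormal and perpendicular to all of $H^2(\alpha)$.

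The main obstacle is the accompanying density statement -- that $\bigcup_{n\ge 0}(b_{\lambda_0}b_\infty)^{-n}H^2(\alpha\mu_{\lambda_0}^n\mu_\infty^n)$ is dense in $L^2_{\omega_\infty}$ -- without which the negative-index system would be merely orthonormal rather than complete. I would establish it by a character-automorphic Weierstrass-type approximation, approximating any $\alpha$-automorphic $L^2$ function uniformly by Laurent-polynomial expressions in $b_{\lambda_0}^{\pm 1},b_\infty^{\pm 1}$ with bounded character-automorphic coefficients on $E$; alternatively, one may invoke the functional model of \cite{PY,SY} to identify $L^2_{\omega_\infty}$ with the spectral $L^2$ of multiplication by $z$, under which the required completeness is exactly the Parseval identity for the spectral resolution.
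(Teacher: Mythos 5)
Your treatment of the positive-index part is sound and is essentially the paper's argument: orthonormality via the one-step decompositions $H^2(\beta)=\bbC\,k_{\lambda_0}^\beta\oplus b_{\lambda_0}H^2(\beta\mu_{\lambda_0}^{-1})$ (and its analogue at $\infty$), and completeness in $H^2(\alpha)$ from the fact that a function lying in every residual space $(b_{\lambda_0}b_\infty)^nH^2(\alpha\mu_{\lambda_0}^{-n}\mu_\infty^{-n})$ must vanish. The paper phrases the last step as ``a zero of infinite multiplicity at $\lambda_0$ and $\infty$ forces $F\equiv 0$'' (the identity theorem), while you give the equivalent quantitative estimate $|F(\lambda)|\le C_\lambda|b_{\lambda_0}(\lambda)b_\infty(\lambda)|^n\|F\|$; either is fine.

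The genuine gap is exactly the one you flag yourself: completeness in $L^2_{\omega_\infty}$. Your backward peeling correctly produces the negative-index vectors as orthonormal and orthogonal to $H^2(\alpha)$, but neither of your two suggested remedies closes the argument. The ``character-automorphic Weierstrass approximation'' is not a statement you can quote, and proving it would amount to proving the density you need. The spectral-theoretic route is worse: identifying $L^2_{\omega_\infty}$ with the spectral $L^2$ of multiplication by $z$ relative to the cyclic pair $\{e_{-1},e_0\}$ presupposes that the $e_m$ span $L^2_{\omega_\infty}$, which is the very claim at issue, so Parseval gives you nothing new. The paper's route is different and decisive: it invokes the description (from \cite{SY}) of the orthogonal complement $L^2_{\omega_\infty}\ominus H^2(\alpha)$ as the complex conjugate of a Hardy space $H^2(\tilde\alpha)$ of the same type (the analogue of $L^2(\bbT)=H^2\oplus\overline{zH^2}$ for the disk, available here because $E\subset\bbR$ and $\omega_\infty$ is harmonic measure). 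Under this identification the negative-index vectors become, up to conjugation and unimodular factors, exactly a system of the form \eqref{basis} in $H^2(\tilde\alpha)$, and the infinite-multiplicity-zero argument you already have applies verbatim to show they span the complement. You should either import that duality statement explicitly or prove it; without it the ``moreover'' clause of the lemma is not established.
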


\begin{proof} The system is orthonormal. Every function from $H^2(\alpha)$ orthogonal to it  has a zero of infinite multiplicity in $\lambda_0$ (and $\infty$) and therefore vanishes identically. To prove the second claim one has to use the description of the orthogonal complement $L^2_{\omega_\infty}\ominus H^2(\alpha)$ by means of the Hardy space \cite{SY} and once again apply the same argument related to the corresponding $H^2$-space and orthonormal basis of reproducing kernels in it.

\end{proof}

\begin{lemma}\label{lmodel}
The multiplication by $z$ with respect to the basis \eqref{basis} is a 5-diagonal self-adjoint matrix,
\begin{equation}\label{ldecomp}
ze_{m+2}=\bar r_{m+2}e_m+\bar p_{m+2}e_{m+1}+q_{m+2} e_{m+2}+
p_{m+3}e_{m+3}+r_{m+4}e_{m+4}.
\end{equation}
 Moreover  $r_m>0$ if and only if 
 \begin{equation}\label{sign}
t_{2n-1}=t_{\infty}^{-n}t_{(-1)}, \quad t_{2n}=t_{\lambda_0}^{-n}t_{(0)}.
\end{equation}

\end{lemma}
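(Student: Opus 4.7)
The plan is to establish, in order: (a) $M_z$ is 5-diagonal in the basis \eqref{basis}; (b) $M_z$ is self-adjoint; and (c) the condition for positivity of the entries $r_m$.

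For (a), I would work with the two interlaced decreasing chains inside $H^2(\alpha)$,
\[
W_n := b_{\lambda_0}^n b_\infty^n H^2(\alpha\mu_{\lambda_0}^{-n}\mu_\infty^{-n}),\qquad
\tilde W_n := b_{\lambda_0}^{n+1}b_\infty^n H^2(\alpha\mu_{\lambda_0}^{-n-1}\mu_\infty^{-n}),
\]
so that $W_n\supset\tilde W_n\supset W_{n+1}$. Inspection of \eqref{basis} shows that $e_{2n}$ spans the one-dimensional quotient $W_n\ominus\tilde W_n$ and $e_{2n+1}$ spans $\tilde W_n\ominus W_{n+1}$; hence $W_n$ has orthonormal basis $\{e_m\}_{m\ge 2n}$ and $\tilde W_n$ has orthonormal basis $\{e_m\}_{m\ge 2n+1}$. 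The decisive point is that $z$ has simple poles exactly at $\lambda_0$ and $\infty$, so $zb_{\lambda_0}b_\infty$ is bounded and holomorphic with nonzero values at both points, giving the shift $zW_n\subset W_{n-1}$, $z\tilde W_n\subset\tilde W_{n-1}$. Applied to $e_{2n+2}\in W_{n+1}$ and $e_{2n+3}\in\tilde W_{n+1}$ this yields $ze_{2n+2}\in W_n$ and $ze_{2n+3}\in\tilde W_n$, and therefore $\langle ze_{2n+2},e_m\rangle=0$ for $m\le 2n-1$ and $\langle ze_{2n+3},e_m\rangle=0$ for $m\le 2n$. The negative part of the index range is treated by installing the same filtration on $L^2_{\omega_\infty}\ominus H^2(\alpha)$ through the Hardy-space description from \cite{SY}.

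For (b), self-adjointness of $M_z$ on $L^2_{\omega_\infty}$ is automatic from the reality of $z$ on $E=\operatorname{supp}\omega_\infty$, and combined with the one-sided vanishing from (a) this completes the 5-diagonal form \eqref{ldecomp}.

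For (c), I would compute the super-super-diagonal entry $\bar r_{m+2}=\langle ze_{m+2},e_m\rangle$ by matching the leading asymptotics of the two sides of \eqref{ldecomp} at $\lambda_0$ (when $m=2n$) and at $\infty$ (when $m=2n+1$). At $\lambda_0$ one has $\operatorname{ord}_{\lambda_0}(ze_{2n+2})=n$ exactly, while among the five basis elements on the right only $e_{2n}$ vanishes to the matching order $n$ (the other four vanish to order $\ge n+1$). Using $b_{\lambda_0}(\lambda)=b_{\lambda_0}'(\lambda_0)(\lambda-\lambda_0)+\cdots$ and $z(\lambda)=c^2/(\lambda-\lambda_0)+\cdots$ and equating leading coefficients gives
\[
\bar r_{2n+2}=\frac{t_{2n+2}}{t_{2n}}\,(zb_{\lambda_0}b_\infty)(\lambda_0)\,\frac{K_{\lambda_0}^{\alpha\mu_{\lambda_0}^{-n-1}\mu_\infty^{-n-1}}(\lambda_0)}{K_{\lambda_0}^{\alpha\mu_{\lambda_0}^{-n}\mu_\infty^{-n}}(\lambda_0)}.
\]
The kernel ratio is a positive real number (it is a ratio of norms $\|k_{\lambda_0}^\beta\|$), and $(zb_{\lambda_0}b_\infty)(\lambda_0)=|(zb_{\lambda_0}b_\infty)(\lambda_0)|\,t_{\lambda_0}$ by the definition of $t_{\lambda_0}$. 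Hence $r_{2n+2}>0$ is equivalent, after cancelling unimodular factors, to $t_{2n+2}=t_{\lambda_0}^{-1}\,t_{2n}$, which iterates to $t_{2n}=t_{\lambda_0}^{-n}\,t_{(0)}$. The analogous local matching at $\infty$ for odd $m=2n+1$, using $b_\infty(\lambda)=c_\infty/\lambda+\cdots$ and the definition of $t_\infty$, yields $t_{2n-1}=t_\infty^{-n}\,t_{(-1)}$; jointly this is exactly \eqref{sign}.

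The main technical difficulty I anticipate is the careful handling of vanishing orders for the basis extended to negative indices, where $ze_m$ may genuinely develop poles and the relevant Hardy spaces must be replaced by their $L^2_{\omega_\infty}$-complements from \cite{SY}; the filtration argument transports across this dichotomy, and the local leading-term computation underpinning (c) is insensitive to it.
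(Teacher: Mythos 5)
Your proposal is correct and follows essentially the same route as the paper: the band structure comes from the fact that $zb_{\lambda_0}b_\infty$ maps $H^2(\alpha)$ into $H^2(\alpha\mu)$ (your filtration $W_n\supset\tilde W_n\supset W_{n+1}$ is just a more explicit packaging of this), self-adjointness comes from the reality of $z$ on $E$, and the sign condition comes from evaluating the leading coefficients of \eqref{ldecomp} at $\lambda_0$ and $\infty$. Your explicit formula for $\bar r_{2n+2}$ agrees with the paper's (which, incidentally, carries a harmless index typo, $\mu^{-(n+2)}$ in place of $\mu^{-(n+1)}$), and your remark about extending the filtration to negative indices via the description of $L^2_{\omega_\infty}\ominus H^2(\alpha)$ is exactly how the paper handles the two-sided basis.
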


\begin{proof} Since the factor $b_{\lambda_0}b_\infty$ removes the singularities of $z$ in $\bar \bbC\setminus E$ the function $z b_{\lambda_0}b_\infty  F$ belongs to $H^2(\alpha\mu)$ for every $F\in H^2(\alpha)$ and an arbitrary $\alpha\in \pi_1(\bar \bbC\setminus E)^*$. Thus the decomposition of $ze_{m+2}$ starts with $e_m$,
\begin{equation*}\label{ldecomp1}
ze_{m+2}=\bar r_{m+2}e_m+\dots
\end{equation*}
Since $z(\lambda)$ is real on $E$ the multiplication operator is self-adjoint; its matrix possesses the symmetry property and therefore it is 5-diagonal \eqref{ldecomp}. Finally we put $\lambda=\lambda_0$ in \eqref{ldecomp} for even $m$
$$
(zb_{\lambda_0}b_\infty)(\lambda_0){K_{\lambda_0}^{\alpha\mu^{-(n+2)}}}(\lambda_0)t_{2n+2}=
\bar r_{2n+2} {K_{\lambda_0}^{\alpha\mu^{-n}}}(\lambda_0)t_{2n}
$$
and $\lambda=\infty$  for odd $m$
$$
(zb_{\lambda_0}b_\infty)(\infty){K_{\infty}^{\alpha\mu_{\infty}\mu^{-(n+2)}}}(\infty)t_{2n+1}=
\bar r_{2n+1} {K_{\infty}^{\alpha\mu_{\infty}\mu^{-n}}}(\infty)t_{2n-1}.
$$
Since $K^{\alpha}_\lambda(\lambda)>0$ we get \eqref{sign}.
\end{proof}

\begin{proof}[Proof of Theorem \ref{harmcond}] Let $J$ be a periodic self-adjoint 5-diagonal matrix and $\cR$ be its spectral surface such that $\cR_+\simeq \bar \bbC\setminus E$. Since $w(\lambda)$ is single-valued in the domain, \eqref{wgreen} and \eqref{expharm} imply that $\omega_j=\omega_{\lambda_0}(E_j)+\omega_\infty(E_j)$ are rational. Futher, due to \eqref{zkw} the function $w z^k$ is regular in the domain, moreover $1/(w z^k)(\lambda_0)$ and
 $1/(w z^k)(\infty)$ are roots of the quadratic equation
 \begin{equation*}
 \begin{split}
 C^2+\left(\frac{-1}{\prod_{j=0}^{k-1}r_{2j}}+\frac{-1}{\prod_{j=0}^{k-1}r_{2j+1}}\right) C+
 \frac{1}{\prod_{j=0}^{2k-1}r_{j}}\\=
\left(C-\frac{1}{\prod_{j=0}^{k-1}r_{2j}}\right)\left(C-\frac{1}{\prod_{j=0}^{k-1}r_{2j}}\right)=0.
 \end{split}
\end{equation*}
Thus $(w z^k)(\lambda_0)>0$ and  $(w z^k)(\infty)>0$. Since $w=t (b_{\lambda_0}b_\infty)^k$, 
$t\in \bbT$, the ratio $(zb_{\lambda_0}b_\infty)^k(\infty)/(zb_{\lambda_0}b_\infty)^k(\lambda_0)$
is also positive. That is $e^{2\pi i k\xi}=1$. And this finishes the necessity part of the theorem.

In the opposite direction, for the given system of cuts we define $J$ according to Lemma \ref{lmodel}. It remains to check that $J$ is periodic. Let $w=t_\infty^{-k}(b_{\lambda_0}b_\infty)^k$. 
Since $\mu^k(\gamma)=1$, for all $\gamma\in\pi_1(\bar\bbC\setminus E)$,  it is single-valued.
Note that $w$ is normalized by the condition
$(wz^k)(\infty)=|(b_{\lambda_0}b_\infty z)^k(\infty)|>0$. We claim that
\begin{equation}\label{mshift}
w e_n=e_{n+2k}.
\end{equation}
For odd $n$ \eqref{mshift} holds  automatically. For even $n$ we should take into account that in addition $t^{-k}_{\infty}t_{\lambda_0}^k=
e^{-2\pi i k \xi}=1$. Thus \eqref{mshift} defines the shift operator. Since the multiplication operators by $z$ and $w$ commute, we have
$JS^{2k}=S^{2k} J$. Therefore $J$ is periodic.
\end{proof}
 
 Now, let us restrict ourselves to the \textit{real} case, i.e., $c_2=\bar c_1$ or both critical values are real. Without lost of generality  $c_2=\bar c_1=2i$ or $c_2=-c_1=2$. Thus, according to \eqref{paramm},
 \begin{equation}\label{paramm1}
z=\lambda-\frac 1\lambda
\end{equation}
in the first case, and
\begin{equation}\label{paramm2}
z=\lambda+\frac 1\lambda
\end{equation}
in the second one.

In the case \eqref{paramm1}, $z^{-1}(\bbR)= \bbR$, thus $E$ is a system of intervals on the real axis. Since $z^{-1}(\infty)= \{0,\infty\}$, $E$ is subject to the restriction  $ \{0,\infty\}\not\subset E$.

If $z$ is of the form \eqref{paramm2}, then $z^{-1}(\bbR)= \bbR\cup\bbT$, that is, $E$ is a union of real intervals and arcs of the unit circle, and again $\{0,\infty\}\not\subset E$. This case under the additional assumption $E\subset\bbT$ leads to periodic CMV matrices \cite{Sim2}.
Indeed,  in the current case the multiplication by $\lambda$ is also well defined and represents a unitary matrix $A$ such that 
\begin{equation}\label{fromCMV}
J=A+ A^{-1}=A+A^*.
\end{equation}
 As it was mentioned this functional model is the same as that related to periodic and almost periodic CMV
matrices, see e.g. \cite{PY}.
Conversely, having a periodic CMV matrix $A$ we obtain the periodic self-adjoint $J$ of the class by 
\eqref{fromCMV}.

Similarly, in the case \eqref{paramm1} the multiplication by $\lambda$ leads to the self-adjoint operator $A$ such that
\begin{equation}\label{fromSMP}
J=A- A^{-1},
\end{equation}
where $A^{-1}$ exists and  corresponds to the multiplication by $1/\lambda$, i.e., to a periodic SMP matrix. Further details of the corresponding functional model are discussed in Section \ref{sect4}.
Note that the case \eqref{paramm2} under the additional assumption $E\subset \bbR$ leads to essentially the same class of self-adjoint operators.

\section{Proof of the Main Theorem}
Let  $E=[b_0,a_0]\setminus \cup_{j= 1}^\k(a_j,b_j)$ be a system of intervals on $\bbR$, recall $\{0,\infty\}\in \bar\bbC\setminus E$, $z=\lambda-1/\lambda$. We apply Theorem \ref{harmcond} in the current case. As it is well known the Green function (say with respect to infinity) is represented by the hyperelliptic integral, see e.g. \cite{Tom, SY},
 $$
 G(\lambda,\infty)=\Re \int^\lambda_{a_0}\frac{\lambda^{\k}+\dots}{\sqrt{\prod_{j=0}^\k(\lambda-a_j)(\lambda-b_j)}}d\lambda.
 $$
 Therefore for the sum of the Green functions we have
 \begin{equation}\label{gfinl}
 G(\lambda,\infty)+G(\lambda,0)=\Re \int^\lambda_{a_0}\frac{M_{\k+1}(\lambda)}{\sqrt{\prod_{j=0}^\k(\lambda-a_j)(\lambda-b_j)}}\frac{d\lambda}\lambda,
\end{equation}
where $M_{\k+1}$ is a monic polynomial of degree $\k+1$. Note that the residue of the corresponding differential at  the origin is $-1$.
 
 \subsection{Spectrum of SMP matrices for the Stieltjes class}\label{sb31}
 Let us consider the simplest case $E\subset \bbR_+$ or $E\subset \bbR_-$
 (we can say that the spectrum is on the upper (lower) sheet of $\cR_+$). The strong Stieltjes moment problem is related to measures supported on the positive half-axis \cite{ON}. The shape of the sum $G(\lambda,\infty)+G(\lambda,0)$ on $\bbR\setminus E$ is shown in Fig. \ref{fig5}. It implies immediately that all the zeros of the polynomial $M_{\k+1}$ in \eqref{gfinl} are real in this case. Indeed, each gap $(a_j,b_j)$, $j\ge 1$, contains at least one critical point; there is a critical point between $-\infty$ and $0$; the total number of critical points is $\k+1$. Therefore
 \begin{equation}\label{abint}
\tilde \theta(\lambda)=i \int^{\lambda}_{a_0}\frac{M_{\k+1}(\lambda)}{\sqrt{\prod_{j=0}^\k(\lambda-a_j)(\lambda-b_j)}}\frac{d\lambda}\lambda
\end{equation}
is the Schwarz-Christoffel integral, which maps conformally the upper half-plane    $\bbH$ onto the (generalized) polygon in Fig. \ref{fig6}.

\begin{figure}
\begin{center} \includegraphics[scale=0.8]
{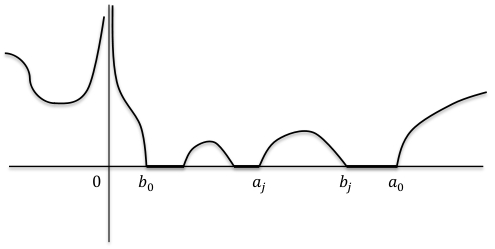}\end{center}
\caption{$G_0+G_\infty$ on $\bbR$: the spectrum is on the upper sheet}\label{fig5}
\end{figure}

\begin{figure}
\begin{center} \includegraphics[scale=1]
{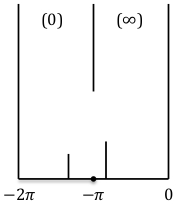}\end{center}
\caption{Image of the Abelian integral $\tilde \theta$}\label{fig6}
\end{figure}

According to \eqref{wgreen}   $w(\lambda)=te^{ik\tilde\theta(\lambda)}$, $t\in\bbT$. Let $\tilde\omega_j$ be the coordinates of the base of the slits. Then $w(\lambda)$ is single valued in $\bar\bbC\setminus E$ if and only if $\tilde\omega_j k\in \pi \bbZ$ for all $j$. It remains to mention that due to the chosen normalization for 
$b_{\lambda_0}$ and $b_\infty$ the product $b_{\lambda_0}(\lambda)b_{\infty}(\lambda)(\lambda-1/\lambda)$ is positive at infinity and negative at the origin, that is $\xi=1/2$. 
Thus we can parametrize the spectral sets of periodic SMP matrices in this case by sufficiently simple domains shown in Fig. \ref{fig6} with rational $\tilde\omega_j$'s (quite similar to the Jacobi and CMV matrices cases).

\subsection{Complex critical points and three real critical points in the same gap}
The situation changes dramatically as soon as  $0\in (a_j,b_j)$, $j\ge 1$, that is, $E=E_-\cup E_+$, 
$E_\pm\subset \bbR_\pm$. Still all gaps, except for $(a_j,b_j)$, should contain a critical point, thus $M_{\k+1}$ has at least $\k-1$ real critical points. However, the positions of two remaining critical points are not a priory fixed.

First, we consider the case when \textbf{two} remaining \textbf{critical points are complex} $\mu_0$ and $\overline{\mu_0}$, $\Im\mu_0 >0$. Let us consider $\tilde\theta(\lambda)$ in the upper half-plane $\bbH$. 
Since locally $\tilde \theta(\lambda)=\tilde \theta(\mu_0)+C(\lambda-\mu_0)^2+\dots$, 
there exist two orthogonal directions where   $\Re d\tilde \theta=0$. Moreover for one of them 
$\Im \tilde \theta$ has a local minimum at $\mu_0$ and a local maximum for another one.
We define the curve $\gamma$, $\mu_0\in\gamma$,  by the condition $\Re d\tilde \theta=0$, such that $\Im\tilde\theta$ increases.  Since there is no other critical point in $\bbH$ this curve should terminate on the real axis. Note that $\Im \tilde \theta(\lambda)$ decreases as  $\lambda$ approaches $E$.  If so, in the gaps $\gamma$ may approach either  a critical point  or  $0$ and $\infty$. The first case is also not possible  since the critical point is a local minimum for 
$\Im\tilde \theta=G_{\lambda_0}+G_\infty$ along the real axis, thus it should be local maximum in the orthogonal direction $\gamma$. But along $\gamma$ it increases. Thus, $\gamma$ terminates at $0$ and $\infty$,   see Fig. \ref{fig7}.

\begin{figure}
\begin{center} \includegraphics[scale=0.8]
{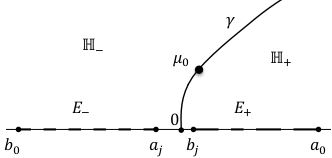}\end{center}
\caption{A complex critical point}\label{fig7}
\end{figure}

As the result we get $\bbH\setminus\gamma=\bbH_-\cup \bbH_+$ such that $E_{\pm}\subset\partial \bbH_{\pm}$. Let $\theta(\lambda)=k\tilde\theta(\lambda)$. Inspecting the boundary behavior of the given analytic function we obtain that it  maps conformally $\bbH_{\pm}$ onto $D_{\pm}$ shown in Fig. \ref{fig8}, where the  point $-\omega_0+ih^2_0$ corresponds to the critical point $\mu_0$.

 \begin{figure}
\begin{center} \includegraphics[scale=0.8]
{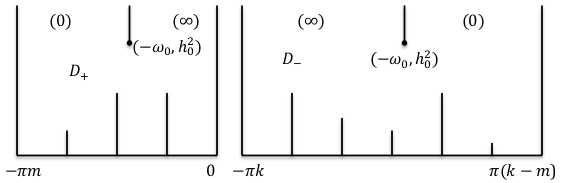}\end{center}
\caption{$D_\pm=\theta(\bbH_\pm)$ regions for a complex critical point}\label{fig8}
\end{figure}

Now we define
\begin{equation}\label{uofth}
u(\lambda)=\sqrt{-i\theta(\lambda)-i\omega_0-h^2_0}\quad\text{for}\ \lambda\in \bbH_+
\end{equation}
Here we assume that $\Im u(\lambda)>0$. Similarly we define
\begin{equation}\label{uofthminus}
u(\lambda)=-\sqrt{-i\theta(\lambda)-i\omega_0-h^2_0}\quad\text{for}\ \lambda\in \bbH_-
\end{equation}
 and in this case $\Im u(\lambda)<0$. In this way we get the regions $\Pi_\pm$.
 Since
 $$
 \theta=-(2\Re u \,\Im u+\omega_0)+i((\Re u)^2-(\Im u)^2+h_0^2),
 $$
these regions are   bounded by hyperbolic curves \eqref{boundud}--\eqref{cut2}, see Fig. \ref{fig1}.
 Gluing the images along the curve $\gamma$ we obtain the conformal mapping of the upper half-plane $\bbH$ onto the special comb domain $\Pi=\Pi_+\cup\Pi_-\cup \bbR$.
 
 Conversely, for the region $\Pi$ described by these equations  we define a conformal map 
 $u:\bbH\to\Pi$ , $u(0)=+\infty$, $u(\infty)=-\infty$, and set
 \begin{equation}\label{spc1b}
w(\lambda)=e^{-(u^2(\lambda)+h^2_0+i\omega_0)}, \quad z=\lambda-\frac 1{\lambda}
\end{equation}
Then, the set $E$ corresponds to $|w|=1$. Since the base of the slits for $D_\pm$ are of the form $\pi \l$, $w$  extended in the lower half-plane is single-valued in $\bar\bbC\setminus E$. Finally,
$w$ is real for $\lambda\in\bbR\setminus E$, that is, $\xi$ is rational. Based on Theorem 
\ref{harmcond} we conclude that this domain can be associated to a periodic SMP matrix.

\smallskip

 Let us turn to the case of \textbf{three real critical points in the same gap.}  In this case $\bbH$ can be decomposed into three pieces. Let $\mu_1<\mu_0<\mu_2$ be critical points in the gap 
 $(a_i,b_i)$. Note that with necessity $\mu_1$ and $\mu_2$ are points of local maximum and
  $\Im \theta$ assumes a local minimum at $\mu_0$ in this interval. Therefore there are directions 
  $\gamma_1$, $\gamma_2$ orthogonal to the real axis at $\mu_1$ and $\mu_2$ respectively such that $\Im\theta$ increases.
  Arguments like the above show that   these curves, $\Re d\theta=0$, terminate at $0$ and $\infty$, see Fig. \ref{fig9}
  
\begin{figure}
\begin{center} \includegraphics[scale=0.9]
{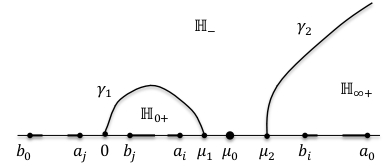}\end{center}
\caption{Three real critical points in the same gap,
$\bbH_{+}=\bbH_{0+}\cup\bbH_{\infty+}$}\label{fig9}
\end{figure}

In each of them, $\theta(\lambda)$ represents a conformal mapping, see Fig. \ref{fig10}. 
In this picture $-\omega_0+i h_0^2$, $-\omega_0+i h_1^2$, and $-\omega_0+ ih_2^2$ are images of the critical points $\mu_0$, $\mu_1$, and  $\mu_2$ respectively and $\omega_0=\pi \l_0$.
We make the change of variable \eqref{uofth}, \eqref{uofthminus}, having in mind that now $\bbH_+$ or $\bbH_-$ consists of two components. We arrive at the parametrization of the spectral curve by the domains of the form Fig. \ref{fig2} such that 
$$
h_-^{(\l_0)}=\sqrt{h_2^2-h_0^2}, \quad h^{(\l_0)}_{+}=\sqrt{h_1^2-h_0^2}
$$
in \eqref{cut5}.
 \begin{figure}
\begin{center} \includegraphics[scale=0.9]
{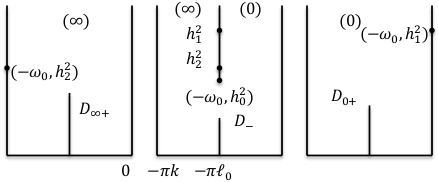}\end{center}
\caption{$D_\pm=\theta(\bbH_\pm)$ three real critical points in the same gap}\label{fig10}
\end{figure}

As before, starting from a region $\Pi$, by \eqref{spc1b} we arrive at the set $E$ and the domain
$\bar \bbC\setminus E\simeq\cR_+$ which corresponds to a periodic SMP matrix.

\subsection{Other cases} In the previous subsection we considered critical values in two main  generic positions. Now let us list the remaining special cases. 

1. For two complex critical values, if $\omega_0=\pi \l_0$ then one of the cuts   in \eqref{cut1} and one in \eqref{cut2} degenerates to the intervals on the imaginary axis. The length of  such a cut can not be arbitrary long, thus $h_{\l_0}$ and $h_{k+\l_0}$ are subject for the conditions \eqref{cut3}. As soon as one of these values approaches  $h_0$ two complex critical values, from the upper and lower half-planes,  approach  the critical value in the corresponding gap. In the limit we have the critical value of multiplicity 3. The same special case can be obtained when two critical values $\mu_1$ and $\mu_2$ tend to $\mu_0$, correspondingly $h_\pm^{(\l_0)}\to 0$, see Fig. \ref{fig2}.

2. The case of a critical point of multiplicity two and a simple critical point in a gap corresponds to   $h_+^{(\l_0)}= 0$, $h_-^{(\l_0)}> 0$  or $h_-^{(\l_0)}= 0$, $h_+^{(\l_0)}>0$.

3. Two critical points (or one critical point of multiplicity two) may appear in the  interval which contains zero or infinity. The domain $\Pi$ looks similar to that one shown in Fig. \ref{fig2}, but the degenerated hyperbola corresponds to the most left (or right) position, i.e.,  $\l_0=0$ or $\l_0=m$.

4. It was assumed that $m\le k$. If $m> k$ the domain $\Pi$ in Fig. \ref{fig1} remains the same, but we switch  the normalization conditions to $u(0)=-\infty$ and $u(\infty)=+\infty$.

5. In the Stieltjes case, subsection \ref{sb31}, the spectral curve was described by a simpler domain, Fig. \ref{fig6}. By \eqref{uofth} it can be transformed to a $\Pi$ region bounded from below by the real axis.

\section{Functional model for periodic SMP matrices}\label{sect4}

 Let $\bar \bbC\setminus E$ correspond to a periodic SMP matrix. We define
\begin{equation}\label{deff1}
\cA(\alpha)=\frac{K_0^\alpha(\infty)}{ K_\infty^{\alpha}(\infty)},\quad
\cB(\alpha)=\frac{K_0^\alpha(0)}{ K_\infty^{\alpha}(\infty)},\quad 
\alpha\in\pi_1(\bar\bbC\setminus E)^*.
\end{equation}
For the reader's convenience we prove here a  known lemma, see e.g. \cite{PY}.

\begin{lemma}The following identities hold true
\begin{equation}\label{deff3}
\cC(\alpha):=\sqrt{1-|\cA(\alpha)|^2}=b_\infty(0)\frac{K_0^{\alpha\mu_\infty^{-1}}(0)}{K_0^\alpha(0)}
\end{equation}
and
\begin{equation}\label{deff2}
\overline{\cA(\alpha)}=\frac{K_\infty^\alpha(0)}{ K_0^{\alpha}(0)},\quad
\cC(\alpha)=b_0(\infty)\frac{K_\infty^{\alpha\mu_0^{-1}}(\infty)}{K^\alpha_\infty(\infty)}.
\end{equation}
\end{lemma}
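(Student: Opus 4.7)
The plan rests on three standard ingredients for Hardy spaces on $\bar\bbC\setminus E$: the Hermitian symmetry $k_\lambda^\alpha(\mu)=\overline{k_\mu^\alpha(\lambda)}$ of the unnormalized reproducing kernel, the identity $\|k_\lambda^\alpha\|=K_\lambda^\alpha(\lambda)$ (so $\|k_\lambda^\alpha\|^2=k_\lambda^\alpha(\lambda)$ is real and positive), and an orthogonal decomposition of $H^2(\alpha)$ induced by the inner functions $b_\infty$ and $b_0$.

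First I would establish the symmetry identity $\overline{\cA(\alpha)}=K_\infty^\alpha(0)/K_0^\alpha(0)$ in \eqref{deff2}. By Hermitian symmetry $k_0^\alpha(\infty)=\overline{k_\infty^\alpha(0)}$; dividing by $\|k_0^\alpha\|\,\|k_\infty^\alpha\|=K_0^\alpha(0)\,K_\infty^\alpha(\infty)$ and conjugating yields the formula directly from the definition of $\cA(\alpha)$.

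For \eqref{deff3}, the key observation is that $b_\infty$ has a unique, simple zero at $\infty$ on $\bar\bbC\setminus E$. Consequently, multiplication by $b_\infty$ is an isometry from $H^2(\alpha\mu_\infty^{-1})$ onto the codimension-one subspace $M\subset H^2(\alpha)$ of elements vanishing at $\infty$, and its orthogonal complement is $M^\perp=\mathrm{span}\{K_\infty^\alpha\}$. For $F\in H^2(\alpha\mu_\infty^{-1})$ one has
\begin{equation*}
(b_\infty F)(0)=b_\infty(0)\,\langle F,k_0^{\alpha\mu_\infty^{-1}}\rangle
=\langle b_\infty F,\,b_\infty(0)\,b_\infty k_0^{\alpha\mu_\infty^{-1}}\rangle,
\end{equation*}
so the reproducing kernel of $M$ at the point $0$ is $b_\infty(0)\,b_\infty k_0^{\alpha\mu_\infty^{-1}}$. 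Decomposing $k_0^\alpha$ along $H^2(\alpha)=M\oplus M^\perp$ and evaluating the resulting identity at $0$ produces
\begin{equation*}
K_0^\alpha(0)^2=k_0^\alpha(0)=b_\infty(0)^2\,K_0^{\alpha\mu_\infty^{-1}}(0)^2+|K_\infty^\alpha(0)|^2.
\end{equation*}
Dividing by $K_0^\alpha(0)^2$, substituting $|K_\infty^\alpha(0)/K_0^\alpha(0)|=|\cA(\alpha)|$ from the previous step, and extracting the positive square root (legitimate because $b_\infty(0)>0$ by the standing normalization) gives \eqref{deff3}.

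The remaining identity $\cC(\alpha)=b_0(\infty)\,K_\infty^{\alpha\mu_0^{-1}}(\infty)/K_\infty^\alpha(\infty)$ in \eqref{deff2} is obtained by the symmetric argument, with the roles of $0$ and $\infty$ interchanged, applied to the decomposition $H^2(\alpha)=b_0 H^2(\alpha\mu_0^{-1})\oplus\mathrm{span}\{K_0^\alpha\}$. I expect the only delicate point to be bookkeeping of characters, since dividing out $b_\infty$ (respectively $b_0$) shifts $\alpha$ to $\alpha\mu_\infty^{-1}$ (respectively $\alpha\mu_0^{-1}$), and verifying that both normalizations $b_\infty(0)>0$ and $b_0(\infty)>0$ are precisely what is needed so that $\cC(\alpha)\ge 0$ comes out of the positive square root with the advertised sign.
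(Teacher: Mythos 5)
Your proposal is correct and follows essentially the same route as the paper: the codimension-one orthogonal decompositions $H^2(\alpha)=\mathrm{span}\{K_\infty^\alpha\}\oplus b_\infty H^2(\alpha\mu_\infty^{-1})$ and $H^2(\alpha)=\mathrm{span}\{K_0^\alpha\}\oplus b_0 H^2(\alpha\mu_0^{-1})$, with the coefficients read off from point evaluations and the Pythagorean identity giving $\cC(\alpha)$. The only cosmetic difference is that you obtain $\overline{\cA(\alpha)}=K_\infty^\alpha(0)/K_0^\alpha(0)$ directly from Hermitian symmetry of $k_\lambda^\alpha$, whereas the paper reads it off from the second decomposition; both are fine.
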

\begin{proof}
 $\cA(\alpha)$ is defined  by the following orthogonal  decompositions
\begin{equation*}
\begin{matrix}
K_0^\alpha&=&\cA(\alpha) K_\infty^{\alpha}&+&\sqrt{1-|\cA(\alpha)|^2} 
b_\infty K_0^{\alpha\mu_\infty^{-1}}\\
b_0 K_\infty^{\alpha\mu_0^{-1}}&=& \sqrt{1-|\cA(\alpha)|^2}K_\infty^{\alpha}&-&
\overline{\cA(\alpha)} b_\infty K_0^{\alpha\mu_\infty^{-1}}
\end{matrix}
\end{equation*}
Indeed,
$\cA(\alpha)=\frac{K_0^\alpha(\infty)}{ K_\infty^{\alpha}(\infty)}$ and 
we get \eqref{deff3}.

Since, in addition,
\begin{equation*}
\begin{matrix}
K_\infty^{\alpha}&=&\overline{\cA(\alpha)} K_0^\alpha&+&\sqrt{1-|\cA(\alpha)|^2} 
b_0 K_\infty^{\alpha\mu_0^{-1}}\\
b_\infty K_0^{\alpha\mu_\infty^{-1}}&=& \sqrt{1-|\cA(\alpha)|^2} K_0^\alpha&-&
{\cA(\alpha)} b_0 K_\infty^{\alpha\mu_0^{-1}}
\end{matrix}
\end{equation*}
we have \eqref{deff2}.

\end{proof}

In what follows without loss of generality we assume that $t_{(-1)}=1$ in \eqref{sign}. In the given case $\lambda_0=0$, so $t_0$ is the new notation for $t_{\lambda_0}$, and this is not the same as the initial $t_{(0)}$. Since
$$
(b_\infty b_0 z)(\infty)= (b_\infty \lambda)(\infty) b_0(\infty),\quad
(b_\infty b_0 z)(0)= -(b_0/ \lambda)(0) b_{\infty}(0),
$$
we have $t_\infty=\phi_\infty/|\phi_\infty|$ and $t_0=-\phi_0/|\phi_0|$,
where
$$
\phi_\infty=\left(\frac{b_\infty\lambda}{b_0}\right)(\infty), \quad
\phi_0=\left(\frac{b_0}{b_\infty\lambda}\right)(0).
$$
Also, recall that $t_\infty/t_0=e^{2\pi i\xi}$.

\begin{theorem}
The multiplication operator by $\lambda$ with respect to the basis \eqref{basis} is  a periodic  SMP matrix $A=A(\alpha,t_{(0)})$ with the following coefficients 
\begin{equation}\label{modelco}
\begin{split}
\bar p_{2n}=&\phi_\infty t_{(0)}e^{2\pi i\xi n} \cA(\alpha\mu^{-n})\cB^{-1}(\alpha\mu^{-n})\cC(\alpha\mu_\infty\mu^{-n})
\cB(\alpha\mu_\infty\mu^{-n})
\\
p_{2n+1}=&-\bar \phi_\infty t_{(0)} t_\infty e^{2\pi i\xi n}
\cC(\alpha\mu^{-n})\cB(\alpha\mu^{-n})
\cA(\alpha\mu_{\infty}\mu^{-n})\cB^{-1}(\alpha\mu_\infty\mu^{-n})
\\
q_{2n}=&-\phi_\infty{ \cA(\alpha\mu^{-n})}\cB^{-1}(\alpha\mu^{-n})
\overline{\cA(\alpha\mu_\infty\mu^{-n})}
\cB(\alpha\mu_\infty\mu^{-n})
\end{split}
\end{equation}
and
\begin{equation}\label{modelco2}
\begin{split}
 \pi_{2n+1}=&t_{(0)} t_{\infty} e^{2\pi i\xi n}\bar \phi_0 
  \cC(\alpha\mu^{-n})\cB(\alpha\mu^{-n})
 \cA(\alpha\mu_\infty\mu^{-(n+1)})\cB^{-1}(\alpha\mu_\infty\mu^{-(n+1)})
\\
\bar\pi_{2n+2}=&-t_{(0)}e^{2\pi i\xi(n+1)}\phi_0
\cA(\alpha\mu^{-n})\cB^{-1}(\alpha\mu^{-n})\cC(\alpha\mu_\infty\mu^{-(n+1)})
\cB(\alpha\mu_\infty\mu^{-(n+1)})\\
\sigma_{2n+1}=&-\phi_0{ \cA(\alpha\mu^{-n})}\cB^{-1}(\alpha\mu^{-n})
\overline{\cA(\alpha\mu_\infty\mu^{-(n+1)})}
\cB(\alpha\mu_\infty\mu^{-(n+1)})
\end{split}
\end{equation}

\end{theorem}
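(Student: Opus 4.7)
My plan is to compute each of the six coefficients in \eqref{modelco}--\eqref{modelco2} directly as an inner product of the form $\langle\lambda e_{m'},e_m\rangle$ or $\langle\lambda^{-1}e_{m'},e_m\rangle$ in $L^2_{\omega_\infty}$. Three standing tools make the computation tractable: multiplication by $\lambda$ is self-adjoint (since $\lambda\in\bbR$ on $E$), the Blaschke factors satisfy $|b_0|=|b_\infty|=1$ on $E$ so their conjugates invert there, and $b_0(\infty)=b_\infty(0)$ by the symmetry of the Green function $G(\lambda,\mu)=G(\mu,\lambda)$ together with the positive normalizations of $b_{\lambda_0}$ and $b_\infty$. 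After invoking these, each relevant integrand becomes a product of two reproducing kernels with a single surviving factor $\lambda b_\infty$ (for $\lambda$) or $\lambda^{-1}b_0$ (for $\lambda^{-1}$), and the reproducing-kernel property evaluates the integral at $0$ or $\infty$.

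The first step is to verify the SMP sparsity pattern: $\langle\lambda e_{2n},e_m\rangle=0$ for $m\notin\{2n-1,2n,2n+1\}$, and symmetrically $\langle\lambda^{-1}e_{2n+1},e_m\rangle=0$ for $m\notin\{2n,2n+1,2n+2\}$. I plan to establish this by iteratively applying the four reproducing-kernel identities from the preceding lemma to decompose $\lambda e_{2n}$ into pieces that are manifestly proportional to $e_{2n-1}$, $e_{2n}$, or $e_{2n+1}$, using that $\lambda b_\infty$ cancels the pole of $\lambda$ at infinity (so each piece enters a Hardy space where the RK identities apply) and that the surviving powers of $b_0,b_\infty$ in the remaining factor match those in $e_{2n-1},e_{2n},e_{2n+1}$ exactly. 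The analogous chain for $\lambda^{-1}e_{2n+1}$ uses $\lambda^{-1}b_0$ to cancel the pole of $\lambda^{-1}$ at $0$.

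For the prototypical nonzero coefficient $\bar p_{2n}=\langle\lambda e_{2n},e_{2n-1}\rangle$, after substituting the basis formulas and using $|b_0|^{2n}=1$ and $b_\infty^n\overline{b_\infty^{n-1}}=b_\infty$ on $E$, the integral reduces to
$$t_{2n}\overline{t_{2n-1}}\,\bigl\langle\lambda b_\infty K_0^{\alpha\mu^{-n}},\,K_\infty^{\alpha\mu^{-n}\mu_\infty}\bigr\rangle.$$
The function $F:=\lambda b_\infty K_0^{\alpha\mu^{-n}}$ lies in $H^2(\alpha\mu^{-n}\mu_\infty)$ because the pole of $\lambda$ at $\infty$ is cancelled by the simple zero of $b_\infty$, so the reproducing property gives $F(\infty)/K_\infty^{\alpha\mu^{-n}\mu_\infty}(\infty)=\phi_\infty b_0(\infty)K_0^{\alpha\mu^{-n}}(\infty)/K_\infty^{\alpha\mu^{-n}\mu_\infty}(\infty)$, using $(\lambda b_\infty)(\infty)=b_\infty'(\infty)=\phi_\infty b_0(\infty)$. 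The phase $t_{2n}\overline{t_{2n-1}}$ simplifies to $t_{(0)}e^{2\pi i\xi n}$ by \eqref{sign}, the normalization $t_{(-1)}=1$, and $t_\infty/t_0=e^{2\pi i\xi}$. Matching with the claimed formula then requires only inserting $K_\infty^{\alpha\mu^{-n}}(\infty)/K_\infty^{\alpha\mu^{-n}}(\infty)$ and $K_0^{\alpha\mu^{-n}}(0)/K_0^{\alpha\mu^{-n}}(0)$, and applying the form $\cC(\beta)=b_\infty(0)K_0^{\beta\mu_\infty^{-1}}(0)/K_0^\beta(0)$ from \eqref{deff3} together with $b_\infty(0)=b_0(\infty)$ to rewrite the resulting quotient as $\cA(\alpha\mu^{-n})\cB^{-1}(\alpha\mu^{-n})\cC(\alpha\mu_\infty\mu^{-n})\cB(\alpha\mu_\infty\mu^{-n})$.

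The remaining five coefficients are produced by the same template: the diagonal entries $q_{2n}$ and $\sigma_{2n+1}$ evaluate via the reproducing kernel at $0$ and involve $\overline{\cA}$; the off-diagonal entries $p_{2n+1}$ and $\bar\pi_{2n+2}$ evaluate at $\infty$; the three coefficients in the $\lambda^{-1}$ half use $\phi_0$ in place of $\phi_\infty$ and the factorization $\lambda^{-1}b_0$ in place of $\lambda b_\infty$, with the phase $t_{2n+1}\overline{t_{2n+2}}$ producing a factor of $e^{2\pi i\xi(n+1)}$. The main obstacle I anticipate is the character bookkeeping: each of the six computations threads a distinct character $\alpha\mu^{-n}\mu_\infty^{\pm 1}\mu_0^{\pm 1}$ through a chain of two or three RK identities, and each phase combination $\phi_\bullet t_\bullet e^{2\pi i\xi(\cdot)}$ must be assembled from \eqref{sign} without sign error. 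No single step is conceptually deep; the content of the theorem lies in the patient and parallel execution of six analogous computations.
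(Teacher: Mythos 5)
Your proposal takes essentially the same route as the paper's proof: establish the three-term structure of $\lambda e_{2n}$ and $\lambda^{-1}e_{2n+1}$ from the fact that $\lambda b_\infty K_0^{\alpha\mu^{-n}}$ (resp.\ $\lambda^{-1}b_0 K_\infty^{\alpha\mu_0^{-1}\mu^{-n}}$) lands in the appropriate Hardy space, compute each entry by the reproducing property at $0$ or $\infty$, assemble the phases from \eqref{sign} with $t_{(-1)}=1$, and repackage the resulting ratios of kernel values via \eqref{deff1}--\eqref{deff2} and $b_0(\infty)=b_\infty(0)$. The one point you pass over too quickly is the diagonal entries $q_{2n}=\langle\lambda K_0^{\alpha\mu^{-n}},K_0^{\alpha\mu^{-n}}\rangle$ and $\sigma_{2n+1}$, where no Blaschke factor survives in the integrand to absorb the pole; there, as in the paper, one must first subtract the orthogonal correction proportional to $K_\infty^{\alpha\mu_\infty\mu^{-n}}/b_\infty$ (resp.\ $K_0^{\alpha\mu^{-n}}/b_0$) so that the difference lies in the Hardy space and the reproducing kernel at $0$ (resp.\ $\infty$) can be applied.
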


\begin{proof}
Note that $b_{\infty}\lambda k_0^\alpha\in H^2(\alpha\mu_\infty)$ and it is orthogonal to
the subspace $b_0 b_\infty^2H^2(\alpha\mu^{-1}_0\mu_\infty^{-2})$. Therefore, in fact, we have the three-terms recurrence relation,
$$
\lambda e_{2n}=\bar p_{2n} e_{2n-1}+ q_{2n} e_{2n}+ p_{2n+1} e_{2n+1}.
$$
Moreover
\begin{equation*}
\bar p_{2n}=t_{(0)}e^{2\pi i\xi n}(\lambda b_\infty)(\infty) \frac{K_0^{\alpha\mu^{-n}}(\infty)}{K_\infty^{\alpha\mu_\infty\mu^{-n}}(\infty)},
\end{equation*}
\begin{equation*}
\begin{split}
q_{2n}=&\langle\lambda K_0^{\alpha\mu^{-n}}, K_0^{\alpha\mu^{-n}}\rangle\\
=&
\langle\lambda K_0^{\alpha\mu^{-n}}-\frac{(\lambda b_\infty)(\infty)K_\infty^{\alpha\mu_\infty\mu^{-n}}}
{b_\infty K_\infty^{\alpha\mu_\infty\mu^{-n}}(\infty)}K_0^{\alpha\mu^{-n}}(\infty), 
K_0^{\alpha\mu^{-n}}\rangle
\\
=&
-\frac{(\lambda b_\infty)(\infty)}
{b_\infty(0)}
\frac{K_\infty^{\alpha\mu_\infty\mu^{-n}}(0)}
{ K_\infty^{\alpha\mu_\infty\mu^{-n}}(\infty)}
\frac{K_0^{\alpha\mu^{-n}}(\infty)}{K_0^{\alpha\mu^{-n}}(0)},
\end{split}
\end{equation*}
and
\begin{equation*}
\begin{split}
p_{2n+1}=&\langle\lambda t_{(0)} t_0^{-n}K_0^{\alpha\mu^{-n}}, 
t_\infty^{-n-1}b_0 K_\infty^{\alpha\mu_0^{-1}\mu^{-n}}\rangle
\\
=t_{(0)}e^{2\pi i\xi n} t_\infty &\langle K_0^{\alpha\mu^{-n}},
\lambda b_0 K_\infty^{\alpha\mu_0^{-1}\mu^{-n}}
-\frac{(\lambda b_0 b_\infty)(\infty)K_\infty^{\alpha\mu_0^{-1}\mu^{-n}}(\infty)K_\infty^{\alpha\mu_\infty\mu^{-n}}}
{b_\infty K_\infty^{\alpha\mu_\infty\mu^{-n}}(\infty)}
\rangle
\\
=&-t_{(0)}e^{2\pi i\xi n} t_\infty\overline{
\frac{(\lambda b_\infty)(\infty)}
{b_\infty(0)}
\frac{K_\infty^{\alpha\mu_\infty\mu^{-n}}(0)}
{ K_\infty^{\alpha\mu_\infty\mu^{-n}}(\infty)}
\frac{b_0(\infty)K_\infty^{\alpha\mu_0^{-1}\mu^{-n}}(\infty)}{K_0^{\alpha\mu^{-n}}(0)}}.
\end{split}
\end{equation*}

In its turn,
$$
\frac 1 {\lambda} e_{2n+1}=\bar \pi_{2n+1} e_{2n}+ \sigma_{2n+1} e_{2n+1}+ \pi_{2n+2} e_{2n+2},
$$
where
\begin{equation*}
\bar \pi_{2n+1}=\bar t_{(0)}\bar t_{\infty} e^{-2\pi i\xi n}\left(\frac{ b_0}{\lambda}
\right)(0) \frac{K_\infty^{\alpha\mu_0^{-1}\mu^{-n}}(0)}{K_0^{\alpha\mu^{-n}}(0)},
\end{equation*}
\begin{equation*}
\begin{split}
\sigma_{2n+1}=&\langle\frac{1}{\lambda} K_\infty^{\alpha\mu_0^{-1}\mu^{-n}}, 
K_\infty^{\alpha\mu_0^{-1}\mu^{-n}}\rangle\\
=&\langle\frac 1\lambda K_\infty^{\alpha\mu_0^{-1}\mu^{-n}}-
\left(\frac{ b_0}{\lambda}\right)(0) 
\frac{K_0^{\alpha\mu^{-n}}}
{b_0 K_0^{\alpha\mu^{-n}}(0)}K_\infty^{\alpha\mu_0^{-1}\mu^{-n}}(0),  
K_\infty^{\alpha\mu_0^{-1}\mu^{-n}}\rangle
\\
=&
-\left(\frac{ b_0}{\lambda}\right)(0) \frac 1{b_0(\infty)}
\frac{K_0^{\alpha\mu^{-n}}(\infty)}{K_0^{\alpha\mu^{-n}}(0)}
\frac{K_\infty^{\alpha\mu_0^{-1}\mu^{-n}}(0)}
{ K_\infty^{\alpha\mu_0^{-1}\mu^{-n}}(\infty)},
\end{split}
\end{equation*}
and
\begin{equation*}
\begin{split}
\pi_{2n+2}=&\langle\frac{1}{\lambda} t_{\infty}^{-n-1} K_\infty^{\alpha\mu_0^{-1}\mu^{-n}}, 
t_{(0)} t_0^{-n-1} b_\infty K_0^{\alpha\mu^{-1}\mu^{-n}}\rangle\\
=\bar t_{(0)}e^{-2\pi i\xi(n+1)}&\langle  K_\infty^{\alpha\mu_0^{-1}\mu^{-n}}
,\frac {b_{\infty}} \lambda K_0^{\alpha\mu^{-n-1}}
-\frac{K_0^{\alpha\mu^{-n}}\left(\frac{ b_0 b_\infty }{\lambda} K_0^{\alpha\mu^{-n-1}}
\right)(0)}
{b_0 K_0^{\alpha\mu^{-n}}(0)}
\rangle
\\
=-\bar t_{(0)}e^{-2\pi i\xi(n+1)}&\overline{
\left(\frac{ b_0 b_\infty}{\lambda}\right)(0) \frac 1{b_0(\infty)}
\frac{K_0^{\alpha\mu^{-n}}(\infty)}{K_0^{\alpha\mu^{-n}}(0)}
\frac{K_0^{\alpha\mu^{-n-1}}(0)}
{ K_\infty^{\alpha\mu_0^{-1}\mu^{-n}}(\infty)}}.
\end{split}
\end{equation*}

Now by making use of \eqref{deff1}, \eqref{deff2}, \eqref{deff3},
we obtain \eqref{modelco}
\begin{equation*}
\begin{split}
q_{2n}=&
-\phi_\infty
\overline{\cA(\alpha\mu_\infty\mu^{-n})}\cB(\alpha\mu_\infty\mu^{-n})\cA(\alpha\mu^{-n})
\cB^{-1}(\alpha\mu^{-n}),
\\
\bar p_{2n}=&
\phi_\infty t_{(0)}e^{2\pi i\xi n} b_\infty(0)
\frac{K_0^{\alpha\mu^{-n}}(\infty)}{K^{\alpha\mu_\infty\mu^{-n}}_\infty(\infty)}
\\
=&
\phi_\infty  t_{(0)}e^{2\pi i\xi n} 
\frac{K_0^{\alpha\mu^{-n}}(\infty)}{K^{\alpha\mu^{-n}}_\infty(\infty)}
\frac{K_\infty^{\alpha\mu^{-n}}(\infty)}{K_\infty^{\alpha\mu_\infty\mu^{-n}}(\infty)}
b_\infty(0)
\\
=&
\phi_\infty  t_{(0)}e^{2\pi i\xi n}\cA(\alpha\mu^{-n})\cB^{-1}(\alpha\mu^{-n})
\cC(\alpha\mu_\infty\mu^{-n})\cB(\alpha\mu_\infty\mu^{-n}),
\end{split}
\end{equation*}

\begin{equation*}
\begin{split}
\bar p_{2n+1}=&-\phi_\infty
\bar t_{(0)} \bar t_\infty e^{-2\pi i\xi n}
\overline{\cA(\alpha\mu_\infty\mu^{-n})}\cB(\alpha\mu_\infty\mu^{-n})\\
&\times
\frac{b_0(\infty)K_\infty^{\alpha\mu_0^{-1}\mu^{-n}}(\infty)}{K_\infty^{\alpha\mu^{-n}}(\infty)}
\frac{K_\infty^{\alpha\mu^{-n}}(\infty)}{K_0^{\alpha\mu^{-n}}(0)}
\\
=&-\phi_\infty 
\bar t_{(0)} \bar t_\infty e^{-2\pi i\xi n}
\overline{\cA(\alpha\mu_\infty\mu^{-n})}\cB(\alpha\mu_\infty\mu^{-n})
\cC(\alpha\mu^{-n})\cB^{-1}(\alpha\mu^{-n}),
\end{split}
\end{equation*}
as well as  \eqref{modelco2}
$$
\bar \pi_{2n+1}=\bar t_{(0)}\bar t_{\infty} e^{-2\pi i\xi n}\phi_0
\frac{K_\infty^{\alpha\mu_0^{-1}\mu^{-n}}(0)}{K^{\alpha\mu_0^{-1}\mu^{-n}}_\infty(\infty)}
b_0(\infty)
\frac{K_\infty^{\alpha\mu_0^{-1}\mu^{-n}}(\infty)}{K^{\alpha\mu^{-n}}_\infty(\infty)}
\frac{K^{\alpha\mu^{-n}}_\infty(\infty)}{K^{\alpha\mu^{-n}}_0(0)}
$$
$$
=\bar t_{(0)}\bar t_{\infty} e^{-2\pi i\xi n}\phi_0
\overline{\cA(\alpha\mu_\infty\mu^{-(1+n)})}
\cB(\alpha\mu_\infty\mu^{-(1+n)})
\cC(\alpha\mu^{-n})\cB^{-1}(\alpha\mu^{-n}),
$$

$$
\sigma_{2n+1}=-\phi_0\cA(\alpha\mu^{-n})\cB^{-1}(\alpha\mu^{-n})
\overline{\cA(\alpha\mu_\infty\mu^{-(n+1)})}\cB(\alpha\mu_\infty\mu^{-(n+1)})
$$
$$
\bar\pi_{2n+2}=-t_{(0)}e^{2\pi i\xi(n+1)}\phi_0{\cA(\alpha\mu^{-n})}\cB^{-1}(\alpha\mu^{-n})
$$
$$
\times b_0(\infty)\frac{K_0^{\alpha\mu^{-(n+1)}}(0)}{K_0^{\alpha\mu_0^{-(n+1)}}(0)}
\frac{K_0^{\alpha\mu_0^{-1}\mu^{-n}}(0)}{K_\infty^{\alpha\mu_0^{-1}\mu^{-n}}(\infty)}
$$
$$
=-t_{(0)}e^{2\pi i\xi(n+1)}\phi_0{\cA(\alpha\mu^{-n})}\cB^{-1}(\alpha\mu^{-n})\cC(\alpha\mu_\infty\mu^{-(n+1)})
\cB(\alpha\mu_\infty\mu^{-(n+1)}).
$$

\end{proof}

\begin{remark}\label{rem43}
The structure of the reproducing kernels on the hyperelliptic Riemann surfaces is well known, see e.g. \cite{SY}. In particular, indeed $K_\infty^\alpha(0)=0$,  i.e., $\cA(\alpha)=0$, for some $\alpha$.
According to \eqref{modelco} and \eqref{modelco2}  it means that the corresponding $A=A(\alpha, t_{(0)})$ may degenerate, that is, $q_{2n}$ or $\sigma_{2n-1}$ vanishes for some $n$. 
Nevertheless all entries of $A$ and $A^{-1}$ have perfect sense. For example,
\begin{equation*}\label{nond}
\begin{split}
r_{2n+1}=&\frac{\bar p_{2n}\bar p_{2n+1}}{q_{2n}}\\
=&|\phi_\infty|
\cC(\alpha\mu^{-n})\cB(\alpha\mu^{-n})
\cC(\alpha\mu_\infty\mu^{-n})\cB^{-1}(\alpha\mu_{\infty}\mu^{-n}),
\\
-\rho_{2n}=&\frac{\bar \pi_{2n-1}\bar \pi_{2n}}{-\sigma_{2n-1}}\\
=&
|\phi_0|\cC(\alpha\mu^{-(n-1)})\cB(\alpha\mu^{-(n-1)})
\cC(\alpha\mu_\infty\mu^{-n})\cB^{-1}(\alpha\mu_{\infty}\mu^{-n}),
\end{split}
\end{equation*}
where
\begin{equation*}\label{inverse}
\begin{split}
A e_{2n-1}=&r_{2n-1}e_{2n-3}+\bar p_{2n-1}e_{2n-2}+q_{2n-1}e_{2n-1} +p_{2n}e_{2n}+
r_{2n+1}e_{2n+1},
\\
A^{-1} e_{2n}=&\rho_{2n}e_{2n-2}+\bar \pi_{2n}e_{2n-1}+\sigma_{2n}e_{2n} +
\pi_{2n+1}e_{2n+1}+\rho_{2n+2}e_{2n+2}.
\end{split}
\end{equation*}
\end{remark}

\bibliographystyle{amsplain}

\begin{thebibliography}{24}



\bibitem{EY}  A. Eremenko and P. Yuditskii, \textit{Comb functions},
Proceedings of the Conference on Orthogonal Polynomials, Special Functions, and Applications (2011), to appear.
\bibitem{MvM}
P. van Moerbekke and D. Mumford, \textit{The spectrum of difference operators and algebraic curves}, Acta Math. 143, 1-2, 1979, 93-154.

\bibitem{ON} O. Nj\r astad, \textit{Solutions of the strong Hamburger moment problem}. J. Math. Anal. Appl. 197 (1996), no. 1, 227-248.

\bibitem{PY}
F.  Peherstorfer and P. Yuditskii, \textit{Almost periodic Verblunsky coefficients and reproducing kernels on Riemann surfaces}, J. Approx. Theory 139 (2006), no. 1-2, 91-106.

\bibitem{Sim} 
B. Simon,  \textit{Orthogonal polynomials on the unit circle}. Part 1. Classical theory. American Mathematical Society Colloquium Publications, 54, Part 1. American Mathematical Society, Providence, RI, 2005. xxvi+466 pp.

\bibitem{Sim2} 
B. Simon, \textit{Orthogonal polynomials on the unit circle}. Part 2. Spectral theory. American Mathematical Society Colloquium Publications, 54, Part 2. American Mathematical Society, Providence, RI, 2005. pp. i-xxii and 467-1044.

\bibitem{SY}
M. Sodin and P. Yuditskii, 
\textit{Almost periodic Jacobi matrices with homogeneous spectrum, infinite dimensional Jacobi inversion, and hardy spaces of character-automorphic functions}, J. Geom. Anal. 7 (1997), 387-435.


 \bibitem{Tom} 
 Yu. Tomchuk, \textit{Orthogonal polynomials over a system of intervals on the real line}, Zap. Fiz.-Mat. Fak. i Kharkov. Mat. Obshch. (4) 29 (1963), 93-128. (Russian).

 \end{thebibliography}

  \vspace{.1in}

Research Institute for Symbolic Computation

Johannes Kepler University Linz

Altenbergerstr. 69, A-4040, Linz, Austria

E-mail: Ionela.Moale@risc.jku.at

 \vspace{.1in}

Abteilung f\"ur Dynamische Systeme und Approximationstheorie,

Johannes Kepler Universit\"at Linz,

A--4040 Linz, Austria

E-mail: Petro.Yudytskiy@jku.at
\end{document}